\documentclass{amsart}
\usepackage{amsmath}
\usepackage{amsthm}
\usepackage{amssymb}

\usepackage[T1]{fontenc}
\usepackage{lmodern}

\usepackage{diagrams}
\input xy
\xyoption{all}

\vfuzz2pt 

 \newtheorem{thm}{Theorem}[section]
 
 \newtheorem{lem}[thm]{Lemma}
 
 \theoremstyle{definition}
 \newtheorem{defn}[thm]{Definition}
 \theoremstyle{remark}
 \newtheorem{rem}[thm]{Remark}
 \numberwithin{equation}{section}


\newcommand{\ZZ}{\mathbb{Z}}

\newcommand{\PP}{\mathbb{P}}

\newcommand{\Hom}{\mathrm{Hom}}
\newcommand{\HOM}{\mathrm{HOM}}

\newcommand{\Ext}{\mathrm{Ext}}
\newcommand{\cExt}{\mathcal{E}xt}

\newcommand{\uAut}{\underline{\mathrm{Aut}}}

\newcommand{\bPicard}{\mathrm{Picard}}
\newcommand{\uPicard}{\mathcal{P}icard}

\newcommand{\Add}{\mathrm{\mathbf{Add}}}

\newcommand{\coker}{\mathrm{coker}}

\newcommand{\h}{\mathrm{H}}
\newcommand{\R}{\mathrm{R}}

\newcommand{\cD}{\mathcal{D}}

\newcommand{\cK}{\mathcal{K}}

\newcommand{\bS}{\textbf{S}}

\newcommand{\eic}{\mathcal{E}}
\newcommand{\oic}{\mathcal{O}}
\newcommand{\pic}{\mathcal{P}}
\newcommand{\qic}{\mathcal{Q}}

\newcommand{\dic}{\mathcal{D}}


\begin{document}

\title[homology and Picard stack]
{Extensions of Picard stacks\\ and\\ their homological interpretation}

\author{Cristiana Bertolin}

\address{Dip. di Matematica, Universit\`a di Milano, Via C. Saldini 50, I-20133 Milano}
\email{cristiana.bertolin@googlemail.com}

\subjclass{18G15}

\keywords{Strictly commutative Picard stacks, extensions}




\begin{abstract} 
Let {\bS} be a site. We introduce the notion of extensions of strictly commutative Picard $\bS$-stacks. We define the pull-back, the push-down, and the sum of such extensions and we compute their homological interpretation: if $\pic$ and $\qic$ are two strictly commutative Picard $\bS$-stacks, the equivalence classes of extensions of $\pic$ by $\qic$ are parametrized by the cohomology group $\Ext^1([\pic],[\qic])$, where $[\pic]$ and $[\qic]$ are the complex associated to $\pic$ and $\qic$ respectively.
\end{abstract}


\maketitle


\tableofcontents

\section*{Introduction}

Let $\bS$ be a site. Let $\pic$ and $\qic$ two strictly commutative Picard $\bS$-stacks. We define an 
 extension of $\pic$ by $\qic$ as a strictly commutative Picard $\bS$-stack $\eic$, two additive functors $I:\qic \rightarrow \eic$ and $ J:\eic \rightarrow \pic$, and an isomorphism of additive functors $J \circ I \cong 0$, such that the following equivalent conditions are satisfied:
\begin{itemize}
	\item $\pi_0(J): \pi_0(\eic) \rightarrow \pi_0(\pic)$ is surjective and $I$ induces an equivalence of strictly commutative Picard $\bS$-stacks between $\qic$ and $\ker(J),$ 
	\item $\pi_1(I): \pi_1(\qic) \rightarrow \pi_1(\eic)$ is injective and $J$ induces an equivalence of strictly commutative Picard $\bS$-stacks between $\coker(I)$ and $\pic$.
\end{itemize}
  By \cite{SGA4} \S 1.4 
 there is an equivalence of categories between the category of strictly commutative Picard $\bS$-stacks and the derived category $\cD^{[-1,0]}(\bS)$ of complexes $K$ of abelian sheaves on $\bS$ such that ${\h}^i (K)=0$ for $i \not= -1$ or $0$. 
 Via this equivalence, the above notion of extension of strictly commutative Picard $\bS$-stacks furnishes the notion of extension in the derived category $\cD^{[-1,0]}(\bS)$. Let $K$ and $L$ be two complexes of $\cD^{[-1,0]}(\bS)$.
In this paper we prove that, as for extensions of abelian sheaves on $\bS$, extensions of $K$ by $L$ are parametrized by the cohomology group ${\Ext}^{1}(K,L)$.

More precisely, the extensions of $\pic$ by $\qic$ form a 2-category ${\cExt}(\pic,\qic)$ where
\begin{itemize}
	\item the objects are extensions of $\pic$ by $\qic$,
	\item the 1-arrows are additive functors between extensions,
	\item the 2-arrows are morphisms of additive functors.
\end{itemize}
Equivalence classes of extensions of strictly commutative Picard $\bS$-stacks are endowed with a group law. We denote by ${\cExt}^1(\pic,\qic)$ the group of equivalence classes of objects of ${\cExt}(\pic,\qic)$, by ${\cExt}^{0}(\pic,\qic)$ the group of isomorphism classes of arrows from an object of ${\cExt}(\pic,\qic)$ to itself, and 
by ${\cExt}^{-1}(\pic,\qic) $ the group of automorphisms of an arrow from an object of ${\cExt}(\pic,\qic)$ to itself. With these notation our main Theorem is 

\begin{thm}\label{thm:intro}
Let $\pic$ and $\qic$ be two strictly commutative Picard $\bS$-stacks. Then we have the following isomorphisms of groups 
\begin{description}
	\item[a] ${\cExt}^1(\pic,\qic) \cong {\Ext}^{1}\big([\pic],[\qic]\big)={\Hom}_{\cD(\bS)}\big([\pic],[\qic][1]\big),$
	\item[b] ${\cExt}^0(\pic,\qic) \cong {\Ext}^{0}\big([\pic],[\qic]\big)= {\Hom}_{\cD(\bS)}\big([\pic],[\qic]\big),$
	\item[c] ${\cExt}^{-1}(\pic,\qic) \cong  {\Ext}^{-1}\big([\pic],[\qic]\big)={\Hom}_{\cD(\bS)}\big([\pic],[\qic][-1]\big).$ 
\end{description} 
where $[\pic]$ and $[\qic]$ denote the complex of $\cD^{[-1,0]}(\bS)$ corresponding to $\pic$ and $\qic$ respectively.
\end{thm}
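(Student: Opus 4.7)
The plan is to use the equivalence from \cite{SGA4} \S 1.4 between strictly commutative Picard $\bS$-stacks and the derived category $\cD^{[-1,0]}(\bS)$ to transport the problem into the derived category, where by definition $\Ext^i([\pic],[\qic]) = \Hom_{\cD(\bS)}([\pic],[\qic][i])$. In each of the three cases I would construct a natural map from the $\cExt$-side to the $\Ext$-side, produce an inverse, and verify compatibility with the group structures.

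For \textbf{(a)}, starting from an extension $\qic \xrightarrow{I} \eic \xrightarrow{J} \pic$, I would pass to the associated sequence
\[
[\qic] \xrightarrow{[I]} [\eic] \xrightarrow{[J]} [\pic] \xrightarrow{\delta} [\qic][1]
\]
in $\cD(\bS)$ and show it is a distinguished triangle: the two equivalent defining conditions for an extension translate, via $\pi_0 = \h^0$ and $\pi_1 = \h^{-1}$, into the exactness of the corresponding long cohomology sequence, and the identifications $[\ker J]\simeq [\qic]$ and $[\coker I]\simeq [\pic]$ ensure the triangle axiom. The class of the connecting morphism $\delta$ is then the desired element of $\Ext^1([\pic],[\qic])$. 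Conversely, given $f\colon [\pic] \to [\qic][1]$, I would represent $f$ by a chain map after resolving $[\pic]$ appropriately, form the mapping fiber (which still lies in $\cD^{[-1,0]}(\bS)$ since its cohomology is concentrated in degrees $-1,0$), and apply Deligne's equivalence to reconstruct a Picard stack extension, with $I$ and $J$ induced by the triangle maps.

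For \textbf{(b)} and \textbf{(c)}, I would fix an extension $\eic$ and examine the Hom-groupoid of $\eic$ to itself inside $\cExt(\pic,\qic)$. A 1-arrow $F\colon \eic \to \eic$ in this 2-category is an additive functor equipped with compatibility 2-isomorphisms with $I$ and $J$; the extension conditions force $F$ to induce the identity on $\pi_0$ and $\pi_1$, so the ``difference'' $F-\mathrm{id}_{\eic}$ descends to a well-defined morphism $[\pic]\to[\qic]$ in $\cD(\bS)$, yielding (b). Likewise, a 2-automorphism of $\mathrm{id}_\eic$ compatible with $I$ and $J$ is detected by its action on the $\pi_1$-torsor attached to each object and packages into a map $[\pic]\to[\qic][-1]$, yielding (c); in both cases, inverses are produced by twisting the identity arrow (resp.\ 2-arrow) by a chosen representative of the derived-category class.

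The main obstacle, and the technical heart of the argument, will be the inverse direction in \textbf{(a)}: given a class $f\in\Hom_{\cD(\bS)}([\pic],[\qic][1])$, one must build an honest Picard stack extension from $f$ and prove that different representing chain maps yield equivalent extensions in the 2-categorical sense of $\cExt(\pic,\qic)$. One also has to check that this bijection carries the Baer-sum group law on equivalence classes of extensions to the additive structure on $\Ext^1$. Both verifications are formal but combinatorially delicate, and they are what forces the argument to descend below the derived category to actual representing complexes rather than remaining purely within $\cD(\bS)$.
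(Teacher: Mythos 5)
Your proposal follows essentially the same route as the paper: in (a) your connecting-morphism class is exactly the paper's $\Theta(\eic)=\partial(id_{[\pic]})$ coming from the triangle $MC(j)[-1]\to[\eic]\to[\pic]\to +$, and your ``resolve and take the mapping fibre'' inverse is the paper's termwise-projective presentation $0\to N\to P\to[\pic]\to 0$ followed by the push-down $U_*st(P)$; in (b) and (c) your ``difference $F-id_{\eic}$'' is precisely the inverse of the paper's equivalence ${\HOM}(\pic,\qic)\to{\HOM}(\eic,\eic)$, $F\mapsto \eic+IFJ\eic$, combined with Lemma \ref{lem:HOM} and (\ref{eq:homotopies}). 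There is no substantive divergence from the paper's argument.
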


This paper is organized as followed: in Section 1 we recall some basic results on strictly commutative Picard $\bS$-stacks. In Section 2 we introduce the notions of fibered product and fibered sum of strictly commutative Picard $\bS$-stacks. 
In Section 3 we define extensions of strictly commutative Picard $\bS$-stacks and morphisms between such extensions. The results of Section 2 will allow us to define
a group law for equivalence classes of extensions of strictly commutative Picard $\bS$-stacks (Section 4). Finally in Section 5 we prove the main Theorem \ref{thm:intro}.

We finish recalling that the non-abelian analogue of \S3 has been developed by Breen (see in particular \cite{B90}, \cite{B92}), by A. Rousseau in \cite{R}, by Aldrovandi and Noohi in \cite{AN09} and by A. Yekutieli in \cite{Y}.

\section*{Acknowledgment}
The author is grateful to Pierre Deligne for his comments on a first version of this paper.


\section*{Notation}

Let $\bS$ be a site. 
Denote by $\cK(\bS)$ the category of complexes of abelian sheaves on the site $\bS$: all complexes that we consider in this paper are cochain complexes.
Let $\cK^{[-1,0]}(\bS)$ be the subcategory of $\cK(\bS)$ consisting of complexes $K=(K^i)_i$ such that $K^i=0$ for $i \not= -1$ or $0$. The good truncation $ \tau_{\leq n} K$ of a complex $K$ of $\cK(\bS)$ is the following complex: $ (\tau_{\leq n} K)^i= K^i$ for $i <n,  (\tau_{\leq n} K)^n= \ker(d^n)$
and $ (\tau_{\leq n} K)_i= 0$ for $i > n.$ For any $i \in {\ZZ}$, the shift functor $[i]:\cK(\bS) \rightarrow \cK(\bS) $ acts on a  complex $K=(K^n)_n$ as $(K[i])^n=K^{i+n}$ and $d^n_{K[i]}=(-1)^{i} d^{n+i}_{K}$.

Denote by $\cD(\bS)$ the derived category of the category of abelian sheaves on $\bS$, and let $\cD^{[-1,0]}(\bS)$ be the subcategory of $\cD(\bS)$ consisting of complexes $K$ such that ${\h}^i (K)=0$ for $i \not= -1$ or $0$. If $K$ and $K'$ are complexes of $\cD(\bS)$, the group ${\Ext}^i(K,K')$ is by definition ${\Hom}_{\cD(\bS)}(K,K'[i])$ for any $i \in {\ZZ}$. Let ${\R}{\Hom}(-,-)$ be the derived functor of the bifunctor ${\Hom}(-,-)$. The cohomology groups\\ ${\h}^i\big({\R}{\Hom}(K,K') \big)$ of 
${\R}{\Hom}(K,K')$ are isomorphic to ${\Hom}_{\cD(\bS)}(K,K'[i])$.

A \emph{2-category} $\mathcal{A}=(A,C(a,b),K_{a,b,c},U_{a})_{a,b,c \in A}$ is given by the following data:
\begin{itemize}
  \item a set $A$ of objects $a,b,c, ...$;
  \item for each ordered pair $(a,b)$ of objects of $A$, a category $C(a,b)$;
  \item for each ordered triple $(a,b,c)$ of objects $A$, a functor
      $K_{a,b,c}:C(b,c) \times C(a,b) \longrightarrow C(a,c),$
      called composition functor. This composition functor have to satisfy the associativity law;
  \item for each object $a$, a functor $U_a:\mathbf{1} \rightarrow C(a,a)$ where \textbf{1} is the terminal category (i.e. the category with one object, one arrow), called unit functor. This unit functor have to provide a left and right identity for the composition functor.
\end{itemize}

This set of axioms for a 2-category is exactly like the set of axioms for a category in which the arrows-sets ${\Hom}(a,b)$ have been replaced by the categories $C(a,b)$.
We call the categories $C(a,b)$ (with $a,b \in A$) the \emph{categories of morphisms} of the 2-category $\mathcal{A}$: the objects of $C(a,b)$ are the \emph{1-arrows} of $\mathcal{A}$ and the arrows of $C(a,b)$ are the \emph{2-arrows} of $\mathcal{A}$.

Let $\mathcal{A}=(A,C(a,b),K_{a,b,c},U_{a})_{a,b,c \in A} $ and $ \mathcal{A}'=(A',C(a',b'),K_{a',b',c'},U_{a'})_{a',b',c' \in A'}$ be two 2-categories. A \emph{2-functor} (called also a \emph{morphism of 2-categories})
$$(F,F_{a,b})_{a,b \in A}: \mathcal{A} \longrightarrow \mathcal{A}'$$
consists of
\begin{itemize}
  \item an application $F: A \rightarrow A'$ between the objects of $\mathcal{A}$ and the objects of $\mathcal{A}'$,
  \item a family of functors $F_{a,b}:C(a,b) \rightarrow C(F(a),F(b))$ (with $a,b \in A$) which are compatible with the composition functors and with the unit functors of $\mathcal{A}$ and $\mathcal{A}'$.
\end{itemize}

\section{Recall on strictly commutative Picard stacks}

Let {\bS} be a site. For the notions of {\bS}-pre-stack, {\bS}-stack and morphisms of {\bS}-stacks we refer to \cite{G} Chapter II 1.2.

A \emph{strictly commutative Picard {\bS}-stack} is an {\bS}-stack of groupoids $\pic$ endowed with a functor $ +: \pic \times_{\bS} \pic \rightarrow \pic, ~~(a,b) \mapsto a+b$, and two 
  natural isomorphisms of associativity $\sigma$ and of commutativity $\tau$, which are described by the functorial isomorphisms
\begin{eqnarray}
\label{ass}  \sigma_{a,b,c} &:& (a + b) + c \stackrel{\cong}{\longrightarrow} a + ( b + c) \qquad \forall~ a,b,c \in \pic, \\
\label{com} \tau_{a,b} &:& a + b \stackrel{\cong}{\longrightarrow} b + a \qquad \forall~ a,b \in \pic;
\end{eqnarray}
such that for any object $U$ of $\bS$, $(\pic (U),+,\sigma, \tau)$ is a strictly commutative Picard category (i.e. it is possible to make the sum of two objects of $\pic (U)$ and this sum is associative and commutative, see \cite{SGA4} 1.4.2 for more details). Here "strictly" means that $\tau_{a,a}$ is the identity 
for all $a \in \pic$.
Any strictly commutative Picard {\bS}-stack admits a global neutral object $e$ and the sheaf of automorphisms of the neutral object ${\uAut}(e)$ is abelian.

Let $\pic$ and $\qic$ be two strictly commutative Picard {\bS}-stacks.
 An \emph{additive functor} $(F,\sum):\pic \rightarrow \qic $
between strictly commutative Picard {\bS}-stacks is a morphism of {\bS}-stacks (i.e. a cartesian {\bS}-functor, see \cite{G} Chapter I 1.1) endowed with a natural isomorphism $\sum$ which is described by the functorial isomorphisms
\[\sum_{a,b}:F(a+b) \stackrel{\cong}{\longrightarrow} F(a)+F(b)
\qquad \forall~ a,b \in \pic \]
and which is compatible with the natural isomorphisms $\sigma$ and $\tau$ of
$\pic$ and $\qic$. A \emph{morphism of additive functors $u:(F,\sum) \rightarrow (F',\sum') $} is an {\bS}-morphism of cartesian {\bS}-functors (see \cite{G} Chapter I 1.1) which is compatible with the natural isomorphisms $\sum$ and $\sum'$ of $F$ and $F'$ respectively.
We denote by ${\Add}_{\bS} (\pic,\qic)$
the category whose objects are additive functors from $\pic$ to $\qic$ and whose arrows are morphisms of additive functors. The category ${\Add}_{\bS} (\pic,\qic)$ is a groupoid, i.e. any morphism of additive functors is an isomorphism of additive functors.

An \emph{equivalence of strictly commutative Picard {\bS}-stacks} between $\pic$ and $\qic$ is an additive functor $(F,\sum):\pic \rightarrow \qic$ with $F$ an equivalence of {\bS}-stacks. Two strictly commutative Picard {\bS}-stacks are \emph{equivalent as strictly commutative Picard {\bS}-stacks} if there exists an equivalence of strictly commutative Picard {\bS}-stacks between them.

To any strictly commutative Picard {\bS}-stack $\pic$, we associate the sheaffification $\pi_0(\pic)$ of the pre-sheaf which associates to each object $U$ of $\bS$ the group of isomorphism classes of objects of $\pic(U)$,
 the sheaf $\pi_1(\pic)$ of automorphisms ${\uAut}(e)$ of the neutral object of $\pic$, and an element $\varepsilon(\pic)$ of ${\Ext}^2(\pi_0(\pic),\pi_1(\pic))$. 
Two strictly commutative Picard {\bS}-stacks $\pic$ and $\pic'$ are equivalent as strictly commutative Picard {\bS}-stacks if and only if $\pi_i(\pic)$ is isomorphic to $\pi_i(\pic')$ for $i=0,1$ and $\varepsilon(\pic)=\varepsilon(\pic')$ (see Remark \ref{012}).

 A \emph{strictly commutative Picard {\bS}-pre-stack} is an {\bS}-pre-stack of groupoids $\pic$ endowed with a functor $ +: \pic \times_{\bS} \pic \rightarrow \pic$ and
two natural isomorphisms of associativity $\sigma$ (\ref{ass}) and of commutativity $\tau$ (\ref{com}),
such that for any object $U$ of $\bS,$ $(\pic (U),+,\sigma, \tau)$ is a strictly commutative Picard category. If $\pic$ is a strictly commutative Picard \bS-pre-stack, there exists modulo a unique equivalence
one and only one pair $(a \pic, j)$ where $a\pic$ is a strictly commutative Picard \bS-stack and $j:  \pic \rightarrow a \pic$ is an additive functor. $(a \pic, j)$ is \emph{the strictly commutative Picard \bS-stack generated by $\pic$.}

To each complex
$K=[K^{-1} \stackrel{d}{\rightarrow}K^0]$
of $\cK^{[-1,0]}(\bS)$, we associate a strictly commutative Picard {\bS}-stack $st(K)$ which is the  {\bS}-stack generated by the following strictly commutative Picard {\bS}-pre-stack $pst(K)$: 
 for any object $U$ of {\bS}, the objects of $pst(K)(U)$ are the elements of $K^0(U)$, and
  if $x$ and $y$ are two objects of $pst(K)(U)$ (i.e. $x,y \in K^0(U)$), an arrow of $pst(K)(U)$ from $x$ to $y$ is an element $f$ of $K^{-1}(U)$ such that $df=y-x$.
A morphism of complexes $g: K \rightarrow L$ induces an additive functor 
$st(g):st(K) \rightarrow st(L)$
between the strictly commutative Picard {\bS}-stacks associated to the complexes $K$ and $L$. 

In~\cite{SGA4} \S 1.4 Deligne proves the following links between strictly commutative Picard {\bS}-stacks and complexes of $\cK^{[-1,0]}(\bS)$, between additive functors and morphisms of complexes and between morphisms of additive functors and homotopies of complexes:
\begin{itemize}
	\item for any strictly commutative Picard $\bS$-stack $\pic$ there exists a complex $K$ of $\cK^{[-1,0]}(\bS)$ such that $\pic = st(K)$;
	\item if $K,L$ are two complexes of $\cK^{[-1,0]}(\bS)$, then for any additive functor $F: st(K) \rightarrow st(L)$ there exists a quasi-isomorphism $k:K' \rightarrow K$ and a morphism of complexes $l:K' \rightarrow L$ such that $F$ is isomorphic as additive functor to $st(l) \circ st(k)^{-1}$;
	\item if $f,g: K \rightarrow L$ are two morphisms of complexes of $\cK^{[-1,0]}(\bS)$, then
\begin{equation}
{\Hom}_{{\Add}_{\bS}(st(K),st(L))}(st(f),st(g)) \cong \Big\{ \mathrm{homotopies}~ H:K \rightarrow L ~~|~~ g-f=dH+Hd \Big\}.
\label{eq:homotopies}
\end{equation}
\end{itemize}
Denote by ${\bPicard}(\bS)$ the category whose objects are small strictly commutative Picard $\bS$-stacks and whose arrows are isomorphism classes of additive functors. We can summarize the above dictionary between strictly commutative Picard $\bS$-stacks and complexes of abelian sheaves on $\bS$ with the following Theorem:

\begin{thm}
The functor
\begin{eqnarray}
\label{st} st: \cD^{[-1,0]}(\bS) &\longrightarrow & {\bPicard}(\bS) \\
 \nonumber  K & \mapsto & st(K) \\
 \nonumber K \stackrel{f}{\rightarrow} L & \mapsto &  st(K) \stackrel{st(f)}{\rightarrow} st(L)
\end{eqnarray}
is an equivalence of categories.
\end{thm}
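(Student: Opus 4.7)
The plan is to unpack the three bullet points immediately preceding the statement into the three standard ingredients of an equivalence of categories, after first verifying that the assignment $K \mapsto st(K)$ descends from $\cK^{[-1,0]}(\bS)$ to the derived category $\cD^{[-1,0]}(\bS)$. Throughout, recall that passage from $\Add_\bS$ to $\bPicard(\bS)$ identifies isomorphic additive functors.

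\medskip

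\noindent\textbf{Step 1: Well-definedness on the derived category.}
At the chain level, $K \mapsto st(K)$ and $f \mapsto st(f)$ is manifestly a functor $\cK^{[-1,0]}(\bS) \to \bPicard(\bS)$, since $pst(-)$ is built naturally from its input complex. To factor through $\cD^{[-1,0]}(\bS)$ I need: (a) homotopic morphisms of complexes yield isomorphic additive functors, which is exactly the content of the homotopy dictionary (\ref{eq:homotopies}), so they represent the same arrow in $\bPicard(\bS)$; and (b) quasi-isomorphisms are sent to equivalences of strictly commutative Picard $\bS$-stacks. For (b), a direct inspection of the construction $pst$ shows $\pi_0(st(K)) \cong \h^0(K)$ and $\pi_1(st(K)) \cong \h^{-1}(K)$, naturally in $K$; hence a quasi-isomorphism $k : K' \to K$ induces isomorphisms $\pi_i(st(k))$ for $i=0,1$, and the characterisation of equivalences via $\pi_0,\pi_1$ and $\varepsilon$ recalled in Section~1 forces $st(k)$ to be an equivalence. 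Thus $st$ factors through the localisation at quasi-isomorphisms and is a well-defined functor on $\cD^{[-1,0]}(\bS)$.

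\medskip

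\noindent\textbf{Step 2: Essential surjectivity and fullness.}
Essential surjectivity is precisely the first bullet point: every strictly commutative Picard $\bS$-stack is of the form $st(K)$. For fullness, let $F: st(K) \to st(L)$ be an additive functor. The second bullet produces a roof $K \xleftarrow{k} K' \xrightarrow{l} L$ with $k$ a quasi-isomorphism and $F \cong st(l) \circ st(k)^{-1}$. By Step~1, this roof defines a morphism $[l]\circ[k]^{-1}\colon K \to L$ in $\cD^{[-1,0]}(\bS)$, and $st$ sends it to the isomorphism class of $F$ by construction.

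\medskip

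\noindent\textbf{Step 3: Faithfulness.}
Given two morphisms $\alpha, \beta : K \to L$ in $\cD^{[-1,0]}(\bS)$, we may, after pulling back along a common quasi-isomorphism $k'' : K'' \to K$, represent them by honest chain maps $f,g : K'' \to L$. Their images $st(f), st(g)$ agree as arrows in $\bPicard(\bS)$ precisely when they are isomorphic as additive functors, and by (\ref{eq:homotopies}) this is equivalent to the existence of a homotopy $g-f=dH+Hd$, which in turn is the condition that $[f]=[g]$ in $\cD(\bS)$. Hence $\alpha=\beta$, which gives faithfulness.

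\medskip

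\noindent\textbf{Main obstacle.}
The delicate point is the bookkeeping in Steps~2 and 3 needed to verify that the inverse correspondence --- send $F$ to the morphism in $\cD^{[-1,0]}(\bS)$ represented by the roof furnished by the second bullet --- is independent of the choice of roof and is compatible with composition of additive functors up to natural isomorphism (i.e.\ that composites of roofs match composites of the corresponding $F$'s modulo isomorphism). This is a routine calculus-of-fractions cocycle check, but it is where the work genuinely sits; the equivalence (\ref{eq:homotopies}) is the key technical input that makes all the diagrams commute up to the right kind of homotopy.
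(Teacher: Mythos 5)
Your proposal is correct and matches the paper's intent: the theorem is stated as a summary of the three Deligne facts quoted immediately before it, with no proof given beyond the citation to \cite{SGA4} \S 1.4, and your Steps 1--3 are the standard unpacking of those facts into well-definedness on $\cD^{[-1,0]}(\bS)$, essential surjectivity, fullness and faithfulness. Two small imprecisions, neither fatal: in Step 3 you claim that $[f]=[g]$ in $\cD(\bS)$ is \emph{equivalent} to the existence of a chain homotopy, which is false in general (equality in the derived category is a priori weaker), but your argument only uses the true implication (homotopic implies equal in $\cD(\bS)$), so faithfulness still goes through; and in Step 1 the classification by $\pi_0,\pi_1,\varepsilon$ recalled in Section 1 classifies \emph{objects} up to equivalence, whereas what you actually need is that an additive functor inducing isomorphisms on $\pi_0$ and $\pi_1$ is an equivalence --- a true and standard statement, but a different one, which deserves a word of justification.
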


We denote by $[\,\,]$ the inverse equivalence of $st$. \\
Let ${\uPicard}(\bS)$ be the 2-category of strictly commutative Picard $\bS$-stacks whose objects are strictly commutative Picard $\bS$-stacks and whose categories of morphisms are the categories ${\Add}_{\bS}(\pic,\qic)$ (i.e.
       the 1-arrows are additive functors between strictly commutative Picard $\bS$-stacks and the 2-arrows are morphisms of additive functors).

\begin{thm}
Via the functor $st$, there exists a 2-functor between
\begin{description}
  \item[(a)] the 2-category whose objects and 1-arrows are the objects and the arrows of the category $\cK^{[-1,0]}(\bS)$ and whose 2-arrows are the homotopies between 1-arrows (i.e. $H$ such that $g-f=dH+Hd$ with $f,g: K \rightarrow L$ 1-arrows),
  \item[(b)] the 2-category ${\uPicard}(\bS)$.
\end{description}
\end{thm}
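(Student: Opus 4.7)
The 2-functor is assembled directly from the data already recorded in the preceding paragraphs. On objects and 1-arrows one uses the functor $st$ of the preceding theorem: a complex $K$ is sent to $st(K)$, and a morphism of complexes $f:K\to L$ is sent to the additive functor $st(f):st(K)\to st(L)$. The categories of morphisms $C(K,L)$ of the 2-category in (a) have chain maps as objects and, as arrows $f\Rightarrow g$, those homotopies $H:K\to L$ satisfying $g-f=dH+Hd$; one first checks that this is a category, with identity on $f$ given by $H=0$ and composite of $H_1:f\Rightarrow g$ with $H_2:g\Rightarrow h$ given by the sum $H_1+H_2$ (the homotopy identities simply add).

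For each pair of complexes $K,L$ the plan is to define a functor
\[
st_{K,L}: C(K,L) \longrightarrow \Add_{\bS}(st(K),st(L))
\]
by $f\mapsto st(f)$ on objects and, on morphisms, by sending a homotopy $H$ to the morphism of additive functors provided by the bijection (\ref{eq:homotopies}). To verify functoriality of $st_{K,L}$ I would unwind (\ref{eq:homotopies}) at the level of the generating prestack: the additive functor $st(f)$ sends $x\in K^0(U)=\mathrm{Ob}(pst(K)(U))$ to $f(x)\in L^0(U)$, while a homotopy $H$ assigns to each such $x$ the element $H(x)\in L^{-1}(U)$, viewed as an arrow $f(x)\to g(x)$ in $pst(L)(U)$. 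The zero homotopy is then sent to the identity morphism of $st(f)$, and the sum $H_1+H_2$ of composable homotopies is sent to the vertical composition of the associated morphisms of additive functors, since composition of arrows in $pst(L)(U)$ is precisely addition in $L^{-1}(U)$.

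Compatibility with the unit functors and with the composition of 1-arrows is automatic, because $st$ is already a functor, so $st(\mathrm{id}_K)=\mathrm{id}_{st(K)}$ and $st(g\circ f)=st(g)\circ st(f)$. The remaining compatibility on 2-arrows amounts to matching horizontal compositions: given $H:f\Rightarrow f'$ in $C(K,L)$ and $H':g\Rightarrow g'$ in $C(L,M)$, I would take the horizontal composite to be
\[
H'\ast H \;:=\; H'\!\circ f \;+\; g'\!\circ H,
\]
which one readily verifies is a homotopy $gf\Rightarrow g'f'$. Evaluated on the prestack, its value at $x\in K^0(U)$ is $H'(f(x))+g'(H(x))\in M^{-1}(U)$, which is exactly the composition of the arrows $g(f(x))\to g'(f(x))\to g'(f'(x))$ in $pst(M)(U)$ representing the horizontal composition of $st_{K,L}(H)$ and $st_{L,M}(H')$ in $\uPicard(\bS)$.

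The main obstacle I anticipate is this last horizontal-composition step, since one must both pick the correct formula for $H'\ast H$ and then check that it really implements horizontal composition on the Picard side under Deligne's dictionary. Once this matching is established on the generating prestacks, functoriality of stackification propagates the identities to $st(K)$, $st(L)$, $st(M)$, and the remaining 2-categorical coherence axioms (associativity of composition functors and the left and right unit laws) hold strictly on the nose because they already do so at the level of chain maps and homotopies.
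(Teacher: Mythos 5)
Your construction is correct, but note that the paper itself offers no proof of this statement: it is presented as part of the recollection of Deligne's dictionary from \cite{SGA4} \S 1.4, with the three bulleted facts (including the bijection (\ref{eq:homotopies})) taken as given. What you have done is supply the explicit verification that these facts assemble into a 2-functor, and the details check out. In particular: $C(K,L)$ is a groupoid under addition of homotopies (identity $H=0$, inverse $-H$); for $x\in K^0(U)$ the degree-zero component $H^0(x)\in L^{-1}(U)$ satisfies $d_L H^0(x)=g(x)-f(x)$ and naturality in $x$ follows from additivity of $H^0$ together with $(g-f)^{-1}=H^0 d_K$; and your horizontal composite $H'\ast H=H'f+g'H$ is indeed a homotopy $gf\Rightarrow g'f'$, since $d(H'f+g'H)+(H'f+g'H)d=(dH'+H'd)f+g'(dH+Hd)=g'f'-gf$, and it agrees with the alternative Godement formula $H'f'+gH$ because the difference is $H'Hd-dH'H$ with $H'H:K\to M[-2]$ landing in $M^{-2}=0$. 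The one point I would flag is your closing claim that the coherence axioms hold ``strictly on the nose'': they do at the level of the prestacks $pst(K)$, but after passing to the generated stacks $st(K)$ one only gets $st(g)\circ st(f)\cong st(g\circ f)$ up to a canonical isomorphism coming from the universal property of stackification, so the compatibility with the composition functors required in the paper's definition of 2-functor is realized by these canonical isomorphisms rather than by equalities. This does not affect the truth of the statement, but the sentence as written overstates what stackification gives you.
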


\begin{rem}\label{012}
Let $K= [K^{-1} \stackrel{d}{\rightarrow}K^0]$ be a complex of $\cD^{[-1,0]}(\bS)$.
The long exact sequence 
\[
0 \longrightarrow {\h}^{-1}(K)  \longrightarrow K^{-1}  \stackrel{d}{\longrightarrow} K^0 \longrightarrow {\h}^{0}(K) \longrightarrow 0
\]
is an element of ${\Ext}^2({\h}^{0}(K),{\h}^{-1}(K))$ that we denote by $\varepsilon(K)$.
The sheaves ${\h}^{0},{\h}^{-1}$ and the element $\varepsilon$ of ${\Ext}^2({\h}^{0},{\h}^{-1})$ classify objects of $\cD^{[-1,0]}(\bS)$ modulo isomorphisms. 
Through the equivalence of categories (\ref{st}), the above classification of objects of $\cD^{[-1,0]}(\bS)$ is equivalent to the classification of 
strictly commutative Picard $\bS$-stacks via
 the sheaves $\pi_0, \pi_1$ and the invariant $ \varepsilon \in {\Ext}^2(\pi_0,\pi_1)$. In particular $ \pi_0(\pic) = {\h}^{0}([\pic]) ,  \pi_1(\pic) = {\h}^{-1}([\pic]),
 \varepsilon(\pic) = \varepsilon([\pic]) .$
\end{rem}

\par \textbf{Example} Let $\pic$ and $\qic$ be two strictly commutative Picard {\bS}-stacks. Let 
$${\HOM}(\pic,\qic)$$
be the following strictly commutative Picard {\bS}-stack:
\begin{itemize}
  \item for any object $U$ of $\bS$, the objects of the category ${\HOM}(\pic,\qic)(U)$ are additive functors from ${\pic}_{|U}$ to ${\qic}_{|U}$ and its arrows are morphisms of additive functors;
  \item the functor $+: {\HOM}(\pic,\qic) \times {\HOM}(\pic,\qic) \rightarrow {\HOM}(\pic,\qic)$ is defined by the formula
      \[(F_1+F_2)(a)=F_1(a)+F_2(a) \qquad \forall~ a \in \pic\]
       and the natural isomorphism
       \[ \sum:(F_1+F_2)(a+b) \stackrel{\cong}{\longrightarrow} (F_1+F_2)(a)+(F_1+F_2)(b)\]
       is given by the commutative diagram
  \[    \xymatrix{
     (F_1+F_2)(a+b)\ar[r]^{\sum}  \ar@{=}[d]& (F_1+F_2)(a)+(F_1+F_2)(b) \ar@{=}[r]& F_1(a)+F_2(a)+F_1(b)+F_2(b)   \\
   F_1(a+b)+F_2(a+b) \ar[rr]_{\sum_{F_1} + \sum_{F_2}}  &  & F_1(a)+F_1(b)+F_2(a)+F_2(b) \ar[u]^{Id + \tau_{F_1(b),F_2(a)}+Id} .
}\]
  \item the natural isomorphisms of associativity $\sigma$ and of commutativity $\tau$ of ${\HOM}(\pic,\qic)$ are defined via the analogous natural isomorphisms of $\qic$.
\end{itemize}
Because of equality (\ref{eq:homotopies}) and of Theorem \ref{st}, we have the following equality in the derived category 
$\cD^{[-1,0]}(\bS)$

\begin{lem}\label{lem:HOM} 
$$ [{\HOM}(\pic,\qic)] = \tau_{\leq 0}{\R}{\Hom}\big([\pic],[\qic]\big). $$
\end{lem}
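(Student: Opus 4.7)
The plan is to use the dictionary provided by Theorem \ref{st} and equation (\ref{eq:homotopies}) to construct an explicit equivalence of strictly commutative Picard $\bS$-stacks between $\HOM(\pic,\qic)$ and $st(\tau_{\leq 0}\R\Hom([\pic],[\qic]))$; applying the inverse equivalence $[\,]$ then yields the stated equality in $\cD^{[-1,0]}(\bS)$.

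Pick complexes $K=[\pic]$ and $L=[\qic]$, replacing $K$ by a K-flat (equivalently K-projective) resolution if necessary so that $\R\Hom(K,L)$ is computed by the naive internal Hom-complex $\Hom^\bullet(K,L)$. This Hom-complex sits in degrees $[-1,1]$: its degree $-1$ part is $\uHom(K^0,L^{-1})$, its degree $0$ part is $\uHom(K^0,L^0)\oplus \uHom(K^{-1},L^{-1})$, and its degree $1$ part is $\uHom(K^{-1},L^0)$. The truncation $\tau_{\leq 0}\Hom^\bullet(K,L)$ is therefore the two-term complex whose degree $0$ sheaf is the sheaf of chain maps $K\to L$, whose degree $-1$ sheaf is $\uHom(K^0,L^{-1})$, and whose differential sends a homotopy $H$ to the chain map it cobounds.

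I would then define an additive functor on the pre-stack level
\[
\Phi: pst(\tau_{\leq 0}\Hom^\bullet(K,L)) \longrightarrow \HOM(\pic,\qic)
\]
by sending an object (a chain map $f:K_{|U}\to L_{|U}$) to the additive functor $st(f): \pic_{|U}\to \qic_{|U}$, and an arrow (a homotopy $H$ realizing $g-f=dH+Hd$) to the morphism $st(f)\Rightarrow st(g)$ furnished by (\ref{eq:homotopies}). Compatibility with the Picard sums is built in, since $st$ is additive in $f$ and $\HOM(\pic,\qic)$ is set up so that $+$ is componentwise. Stackifying produces an additive functor
\[
\widetilde{\Phi}: st(\tau_{\leq 0}\R\Hom([\pic],[\qic])) \longrightarrow \HOM(\pic,\qic).
\]

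By Remark \ref{012} it suffices to verify that $\widetilde{\Phi}$ induces isomorphisms on $\pi_0$ and $\pi_1$. The sheaf $\pi_0$ of $\HOM(\pic,\qic)$ is, by sheafification and Theorem \ref{st}, the sheaf $\uHom_{\cD(\bS)}([\pic],[\qic])=\h^0(\R\Hom([\pic],[\qic]))$, matching $\pi_0$ of the source. The sheaf $\pi_1$ of $\HOM(\pic,\qic)$ is the sheaf of automorphisms of the zero additive functor, which by (\ref{eq:homotopies}) consists precisely of homotopies $H$ with $dH+Hd=0$, i.e.\ cycles in $\Hom^{-1}(K,L)$; this agrees with $\h^{-1}(\R\Hom([\pic],[\qic]))$, which is $\pi_1$ of the source. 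The main technical hurdle is the bookkeeping required to confirm that $\Phi$ respects the commutativity and associativity constraints defining $+$ on $\HOM(\pic,\qic)$ (in particular the $\tau_{F_1(b),F_2(a)}$ twist in the diagram defining $+$) and that it descends to the stackification; once these compatibilities are verified, $\widetilde{\Phi}$ is an equivalence of strictly commutative Picard $\bS$-stacks, and applying the inverse equivalence $[\,]$ to this equivalence gives the claimed equality of complexes.
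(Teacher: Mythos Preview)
Your approach is exactly what the paper has in mind: the paper gives no argument beyond the sentence ``Because of equality~(\ref{eq:homotopies}) and of Theorem~\ref{st}'', and your proposal is a fleshing-out of precisely that dictionary. Two small corrections are in order. First, K-flat resolutions compute $\otimes^{\rL}$, not $\R\Hom$, and the parenthetical ``equivalently K-projective'' is false in general; on an arbitrary site you should resolve $L=[\qic]$ K-injectively (or, when the site has enough projectives, resolve $K$ K-projectively). Second, Remark~\ref{012} classifies \emph{objects} up to abstract equivalence via $\pi_0,\pi_1,\varepsilon$ and is not the statement you want; what you need is that an \emph{additive functor} inducing isomorphisms on $\pi_0$ and $\pi_1$ is an equivalence, which follows directly from Theorem~\ref{st} (a morphism in $\cD^{[-1,0]}(\bS)$ is an isomorphism iff it induces isomorphisms on $\h^{-1}$ and $\h^0$).
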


We can define the following bifunctor on ${\bPicard}(\bS) \times {\bPicard}(\bS)$
\begin{eqnarray}
\nonumber {\HOM}: {\bPicard}(\bS) \times {\bPicard}(\bS)&\longrightarrow & {\bPicard}(\bS) \\
 \nonumber  (\pic,\qic) & \mapsto & {\HOM}(\pic,\qic).
\end{eqnarray}
\section{Operations on strictly commutative Picard stacks}

We start defining the product of two strictly commutative Picard $\bS$-stacks.
Let $\pic$ and $\qic$ be two strictly commutative Picard $\bS$-stacks. 

\begin{defn}
The \emph{product of $\pic$ and $\qic$} is the strictly commutative Picard $\bS$-stack 
$\pic \times \qic$ defined as followed:
\begin{itemize}
  \item for any object $U$ of $\bS$, an object of the category $\pic \times \qic(U)$ is a pair $(p,q)$ of objects with $p$ an object of $\pic(U)$ and $q$ an object of $\qic(U)$;
  \item for any object $U$ of $\bS$, if $(p,q)$ and $(p',q')$ are two objects of 
  $\pic \times \qic(U)$, an arrow of $\pic \times \qic(U)$ from $(p,q)$ to $(p',q')$ is a pair $(f,g)$ of arrows with $f:p \rightarrow  p' $ an arrow of $\pic(U)$ and $g:q \rightarrow  q' $ an arrow of $\qic(U)$;
  \item the functor $+: (\pic \times \qic) \times (\pic \times \qic) \rightarrow \pic \times \qic$ is defined by the formula
      \[(p,q)+(p',q')=(p+p',q+q') \]
for any $p,p' \in \pic$ and $q,q' \in \qic$;
  \item the natural isomorphisms of associativity $\sigma$ and of commutativity $\tau$ of $\pic \times \qic$ are defined via the analogous natural isomorphisms of $\pic$ and  $\qic$.
\end{itemize}
\end{defn}

In the derived category $\cD^{[-1,0]}(\bS)$ we have the following equality
$$[\pic \times \qic ]=[\pic ] +[\qic ]$$
which implies the following equality of abelian sheaves 
$$\pi_i(\pic \times \qic ) = \pi_i(\pic) + \pi_i(\qic) \qquad \mathrm{for} \qquad i=0,1.$$

Now we define the fibered product (called also the pull-back) and the fibered sum (called also the push-down) of strictly commutative Picard $\bS$-stacks. Let $G:\pic \rightarrow \qic$ and $F:\pic' \rightarrow \qic$ be additive functors between strictly commutative Picard $\bS$-stacks.

\begin{defn}\label{def:pull-back}
 The \emph{fibered product of $\pic$ and $\pic'$ over $\qic$ via $F$ and $G$} is the strictly commutative Picard $\bS$-stack $\pic \times_\qic \pic'$ defined as followed: 
\begin{itemize}
  \item for any object $U$ of {\bS}, the objects of the category $(\pic \times_\qic \pic')(U)$ are triplets $(p,p',f)$ where $p$ is an object of $\pic(U)$, $p' $ is an object of $\pic'(U)$ and $f:G(p) \stackrel{\cong}{\rightarrow}F(p')$ is an isomorphism of $\qic(U)$ between $G(p)$ and $F(p')$;
  \item for any object $U$ of {\bS}, if $(p_1,p'_1,f)$ and $(p_2,p'_2,g)$ are two objects of
  $(\pic \times_\qic \pic')(U)$, an arrow of $(\pic \times_\qic \pic')(U)$ 
 from  $(p_1,p'_1,f)$ to $(p_2,p'_2,g)$ is a pair $(f,g)$ of arrows with
  $\alpha:p_1 \rightarrow p_2$ of arrow of $\pic(U)$ and $\beta:p'_1 \rightarrow p'_2$ an arrow of $\pic'(U)$ such that $ g \circ G(\alpha) = F(\beta) \circ f$;
  \item the functor $+: (\pic \times_\qic \pic') \times (\pic \times_\qic \pic') \rightarrow \pic \times_\qic \pic'$ is induced by the functors $+: \pic \times \pic \rightarrow \pic$ and $+: \pic' \times \pic' \rightarrow \pic'$;
  \item the natural isomorphisms of associativity $\sigma$ and of commutativity $\tau$ are induced by the analogous natural isomorphisms of $\pic$ and $\pic'$.
\end{itemize}
\end{defn}

The fibered product $\pic \times_\qic \pic'$ is also called the \emph{pull-back $F^*\pic$ of $\pic$ via $F:\pic' \rightarrow \qic$} or the \emph{pull-back $G^*\pic'$ of $\pic'$ via $G:\pic \rightarrow \qic$}. It is endowed with two additive functors $Pr_1: \pic \times_\qic \pic' \rightarrow \pic$ and $ Pr_2: \pic \times_\qic \pic' \rightarrow \pic'.$ Moreover it satisfies the following universal property: given two additive functors $H: \oic \rightarrow \pic$ and 
$K: \oic \rightarrow \pic'$, and given an isomorphism of additive functors $ F \circ K \cong G \circ H$, then there exists a unique additive functor $U: \oic \rightarrow \pic \times_\qic \pic'$ such that we have two isomorphisms of additive functors $H \cong Pr_1 \circ U$ and $K \cong Pr_2 \circ U$.
The following square formed by the fibered sum $\pic \times_\qic \pic'$
\[    \xymatrix{
 \pic \times_\qic \pic'  \ar[r]^{Pr_2}  \ar[d]_{Pr_1}& \pic' \ar[d]^{ F} \\
  \pic \ar[r]_{G}  & \qic
}\]
is called a \emph{pull-back square} or a \emph{cartesian square}.

If $[\pic]=[K^{-1}  \stackrel{d_K}{\rightarrow} K^0],[\pic']=[L^{-1}  \stackrel{d_L}{\rightarrow} L^0]$ and $[\qic]=[M^{-1}  \stackrel{d_M}{\rightarrow} M^0],$ in the derived category $\cD^{[-1,0]}(\bS)$ we have the following equality
$$[\pic \times_\qic \pic' ]=[K^{-1} \times_{M^{-1}} L^{-1} \stackrel{d_K \times_{d_M}d_L}{\longrightarrow} K^{0} \times_{M^{0}} K^{0}]$$
where for $i=-1,0$ the abelian sheaf $K^{i} \times_{M^{i}} L^{i}$ is the fibered product of $K^i$ and of $L^i$ over $M^i$ and the morphism of abelian sheaves $d_K \times_{d_M}d_L $ is given by the universal property of the fibered product 
$K^{0} \times_{M^{0}} K^{0}$. 

Remark that we have the exact sequences of abelian sheaves 
$$0 \longrightarrow \pi_1(\pic \times_\qic \pic') \longrightarrow \pi_1(\pic)+ \pi_1(\pic').$$

Now we introduce the dual notion of fibered product: the fibered sum. Let $G:\qic \rightarrow \pic$ and $F:\qic \rightarrow \pic'$ be additive functors between strictly commutative Picard $\bS$-stacks.

\begin{defn}\label{def:push-down}
 The \emph{fibered sum of $\pic$ and $\pic'$ under $\qic$ via $F$ and $G$} is the strictly commutative Picard $\bS$-stack $\pic +^\qic \pic'$ generated by the following strictly commutative Picard $\bS$-pre-stack $\dic$:
\begin{itemize}
  \item for any object $U$ of {\bS}, the objects of the category $\dic(U)$ are the objects of the category $(\pic \times \pic')(U)$, i.e. pairs $(p,p')$ with $p$ an object of $\pic(U)$ and  $p'$ an object of $\pic'(U)$;
  \item for any object $U$ of {\bS}, if $(p_1,p'_1)$ and $(p_2,p'_2)$ are two objects of $\dic(U)$, an arrow of $\dic (U)$ from $(p_1,p'_1)$ to $(p_2,p'_2)$ is an isomorphism class of triplets  
  $(q,\alpha,\beta)$ with $q$ an object of $\qic (U)$, $\alpha: p_1 + G(q) \rightarrow p_2$ an arrow of $\pic(U)$ and $\beta: p'_1 + F(q) \rightarrow p'_2$ an arrow of $\pic'(U)$;
  \item the functor $+: \dic \times \dic \rightarrow \dic$ is induced by the functors $+: \pic \times \pic \rightarrow \pic$ and $+: \pic' \times \pic' \rightarrow \pic'$;
  \item the natural isomorphisms of associativity $\sigma$ and of commutativity $\tau$ are induced by the analogous natural isomorphisms of $\pic$ and $\pic'$.
\end{itemize}
\end{defn}

The fibered sum $\pic +^\qic \pic'$ is also called the \emph{push-down $F_*\pic$ of $\pic$ via $F:\qic \rightarrow \pic'$} or the \emph{push-down $G_*\pic'$ of $\pic'$ via $G:\qic \rightarrow \pic$}. It is endowed with two additive functors $In_1: \pic \rightarrow \pic +^\qic \pic'$ and $In_2: \pic' \rightarrow \pic +^\qic \pic'$. Moreover it satisfies the following universal property: given two additive functors $H: \pic \rightarrow \oic$ and 
$K: \pic' \rightarrow \oic$, and given an isomorphism of additive functors $ K \circ F \cong H \circ G$, then there exists a unique additive functor $U: \pic +^\qic \pic' \rightarrow \oic $ such that we have two isomorphisms of additive functors $H \cong U \circ In_1 $ and $K \cong U \circ In_2$.
The following square formed by the fibered sum $\pic +^\qic \pic'$ 
\[    \xymatrix{
 \qic \ar[r]^F  \ar[d]_G& \pic' \ar[d]^{In_2} \\
  \pic \ar[r]_{In_1}  & \pic +^\qic \pic' 
}\]
is called a \emph{push-down square} or a \emph{cocartesian square}.

If $[\pic]=[K^{-1}  \stackrel{d_K}{\rightarrow} K^0],[\pic']=[L^{-1}  \stackrel{d_L}{\rightarrow} L^0]$ and $[\qic]=[M^{-1}  \stackrel{d_M}{\rightarrow} M^0],$ in the derived category $\cD^{[-1,0]}(\bS)$ we have the following equality
$$[\pic +^\qic \pic' ]=[K^{-1} +^{M^{-1}} L^{-1} \stackrel{d_K +^{d_M}d_L}{\longrightarrow} K^{0} +^{M^{0}} K^{0}]$$
where for $i=-1,0$ the abelian sheaf $K^{i} +^{M^{i}} L^{i}$ is the fibered sum of $K^i$ and of $L^i$ under $M^i$ and the morphism of abelian sheaves $d_K +^{d_M}d_L $ is given by the universal property of the fibered product 
$K^{-1} +^{M^{-1}} K^{-1}$.

Remark that we have the exact sequences of abelian sheaves 
$$\pi_0(\pic)+ \pi_0 (\pic') \longrightarrow \pi_0(\pic +^\qic \pic') \longrightarrow 0.$$

\section{Extensions of strictly commutative Picard stacks}

Let $\pic$ and $\qic$ be two strictly commutative Picard $\bS$-stacks. Consider an additive functor $F:\pic \rightarrow \qic$. Denote by $\mathbf{1}$
the strictly commutative Picard $\bS$-stack such that for any object $U$ of $\bS$,
$\mathbf{1}(U)$ is the category with one object and one arrow. 

\begin{defn}\label{def:kercoker}
The \emph{kernel of $F$, $\ker(F),$} is the fibered product $ \pic \times_\qic \mathbf{1}$ of $\pic$ and $ \mathbf{1}$ over $\qic$ via $F:\pic \rightarrow \qic$ and the additive functor $\mathbf{1}: \mathbf{1} \rightarrow \qic$. \\
The \emph{cokernel of $F$, $\coker(F),$} is the fibered sum $\mathbf{1} +^\pic \qic$ of $ \mathbf{1}$ and $\qic$ under $\pic$ via $F: \pic \rightarrow \qic$ and the additive functor $\mathbf{1}: \pic \rightarrow \mathbf{1} $. 
\end{defn}

We have the cartesian and cocartesian squares 
\[  \xymatrix{
 \ker(F) \ar[r]  \ar[d] & \mathbf{1} \ar[d]^{\mathbf{1}} \\
 \pic \ar[r]_F  & \qic
}
\qquad    \qquad
    \xymatrix{
 \pic \ar[r]^F  \ar[d]_{\mathbf{1}}& \qic \ar[d] \\
  \mathbf{1} \ar[r]  & \coker(F)
}\]
 and the exact sequences of abelian sheaves 
\begin{equation}
\label{eq:ker-coker} 0 \longrightarrow \pi_1(\ker(F)) \longrightarrow \pi_1(\pic) \qquad   \qquad 
\pi_0(\qic) \longrightarrow \pi_0(\coker(F)) \longrightarrow 0.	
\end{equation}

Explicitly, according to Definition \ref{def:pull-back} the kernel of $F$ is the strictly commutative Picard $\bS$-stack $\ker(F)$ where
\begin{itemize}
  \item for any object $U$ of {\bS}, the objects of the category $\ker(F)(U)$ are pairs $(p,f)$ where $p$ is an object of $\pic(U)$ and $f:F(p) \stackrel{\cong}{\rightarrow} e$ is an isomorphism between $F(p)$ and the neutral object $e$ of $\qic$;
  \item for any object $U$ of {\bS}, if $(p,f)$ and $(p',f')$ are two objects of
  $\ker(F)(U)$, an arrow $\alpha:(p,f) \rightarrow (p',f')$ of $\ker(F)(U)$ is an arrow $\alpha:p \rightarrow p'$ of $\pic(U)$ such that $f' \circ F(\alpha) =f. $
\end{itemize}
By definition \ref{def:push-down} the cokernel of $F$ is the strictly commutative Picard $\bS$-stack $\coker(F)$ generated by the following strictly commutative Picard $\bS$-pre-stack\\
 $\coker'(F)$ where
\begin{itemize}
  \item for any object $U$ of {\bS}, the objects of $\coker'(F)(U)$ are the objects of $\qic(U)$;
   \item for any object $U$ of {\bS}, if $q$ and $q'$ are two objects of
   $\coker'(F)(U)$ (i.e. objects of $\qic(U)$), an arrow of $\coker'(F)(U)$ from $q$ to $q'$ is an isomorphism class of pairs $(p, \alpha)$ with
   $p$ an object of $\pic(U)$ and $\alpha: q + F(p) \rightarrow q'$ an arrow of $\qic(U)$.
\end{itemize}

\begin{defn}\label{def:extpic}
An \emph{extension $\eic=(\eic,I,J) $ of $\pic$ by $\qic$} 
\begin{equation}
\qic \stackrel{I}{\longrightarrow} \eic \stackrel{J}{\longrightarrow} \pic  
\end{equation}
consists of 
\begin{enumerate}
	\item a strictly commutative Picard $\bS$-stack $\eic$,
	\item two additive functors $I:\qic \rightarrow \eic$ and $ J:\eic \rightarrow \pic$,
	\item an isomorphism of additive functors between the composite $J \circ I$ and the trivial additive functor: $J \circ I \cong 0,$
\end{enumerate}
such that the following equivalent conditions are satisfied:
   \begin{description}
     \item[(a)] $\pi_0(J): \pi_0(\eic) \rightarrow \pi_0(\pic)$ is surjective and $I$ induces an equivalence of strictly commutative Picard $\bS$-stacks between $\qic$ and $\ker(J);$
     \item[(b)] $\pi_1(I): \pi_1(\qic) \rightarrow \pi_1(\eic)$ is injective and $J$ induces an equivalence of strictly commutative Picard $\bS$-stacks between $\coker(I)$ and $\pic$. 
   \end{description}
\end{defn}

The additive functors $I: \qic \rightarrow \eic$ and $J: \eic \rightarrow \pic$ induce the sequences of abelian sheaves
\[
0 \longrightarrow \pi_1(\qic) \stackrel{\pi_1(I)}{\longrightarrow} \pi_1(\eic) \stackrel{\pi_1(J)}{\longrightarrow} \pi_1(\pic)\]
\[
\pi_0(\qic) \stackrel{\pi_0(I)}{\longrightarrow} \pi_0(\eic) \stackrel{\pi_0(J)}{\longrightarrow} \pi_0(\pic)\longrightarrow 0\]
which are exact in $\pi_1(\qic)$ and $ \pi_0(\pic)$ because of the equivalences of strictly commutative Picard $\bS$-stacks $\qic \cong \ker(J)$ and $\coker(I) \cong \pic.$ According to \cite{AN09} Proposition 6.2.6, we can say 
 more about these two sequences: in fact there exists a connecting morphism of abelian sheaves 
\[ \delta : \pi_1(\pic) \longrightarrow \pi_0(\qic) \]
leading to the long exact sequence
\begin{equation}
0 \longrightarrow \pi_1(\qic) \stackrel{\pi_1(I)}{\longrightarrow} \pi_1(\eic) \stackrel{\pi_1(J)}{\longrightarrow} \pi_1(\pic) \stackrel{\delta}{\longrightarrow} \pi_0(\qic) \stackrel{\pi_0(I)}{\longrightarrow} \pi_0(\eic) \stackrel{\pi_0(J)}{\longrightarrow} \pi_0(\pic) \longrightarrow 0.
\label{prop:longexseq}
\end{equation}
Explicitly the connecting morphism $\delta: \pi_1(\pic) \rightarrow \pi_0(\qic) $
is defined as followed: if $f:e_\pic \rightarrow e_\pic$ is an element of $\pi_1(\pic)(U)$ with $U$ an object of $\bS$, then $\delta (f)$ represent the isomorphism class of the element 
\[(e_{\eic},f \circ 1_J)\]
 of $\ker(J)(U) \cong \qic(U)$, where $1_J:J(e_\eic) \stackrel{\cong}{\rightarrow} e_{\pic}$ is the isomorphism resulting from the additivity of the functor $J:\eic \rightarrow \pic$ (here $e_\eic$ and $e_\pic$ are the neutral objects of $\eic$ and $\pic$ respectively).\\

Let $\pic, \qic, \pic'$ and $\qic'$ be strictly commutative Picard $\bS$-stacks. Let $\eic=(\eic,I,J)$ be an extension of $\pic$ by $\qic$ and let $\eic'=(\eic',I',J')$ be an extension of $\pic'$ by $\qic'$.

\begin{defn}\label{def:morfismi}
 A \emph{morphism of extensions}
\[(F,G,H): \eic \longrightarrow \eic' \]
consists of 
\begin{enumerate}
	\item three additive functors $F:\eic \rightarrow \eic', G: \pic \rightarrow \pic', H:\qic \rightarrow \qic'$,
	\item two  isomorphisms of additive functors $J' \circ F \cong G \circ J$ and $ F \circ I \cong I'\circ H$,
\end{enumerate}
which are compatible with the isomorphisms of additive functors  $J \circ I \cong 0$ and $J' \circ I' \cong 0$ underlying the extensions $\eic$ and $\eic'$, i.e. the composite 
\[ 0 \stackrel{\cong}{\longleftrightarrow} G \circ 0 \stackrel{\cong}{\longleftrightarrow} G \circ J \circ I  \stackrel{\cong}{\longleftrightarrow}  J' \circ F \circ I \stackrel{\cong}{\longleftrightarrow} J' \circ I'\circ H  \stackrel{\cong}{\longleftrightarrow}  0 \circ H \stackrel{\cong}{\longleftrightarrow} 0\]
should be the identity.
\end{defn}

The three additive functors $F, G$ and $ H$ furnish the following commutative diagram modulo isomorphisms of additive functors 
\[\xymatrix{
   \qic  \ar[d]_{H} \ar[r]^{I} & \eic \ar[d]_{F} \ar[r]^{J} & \pic  \ar[d]^{G}  \\
  \qic' \ar[r]_{I'} & \eic' \ar[r]_{J'} & \pic' .
}
\]

Fix two strictly commutative Picard $\bS$-stacks $\pic$ and $\qic$.
The extensions of $\pic$ by $\qic$ form a 2-category 
$${\cExt}(\pic,\qic)$$
 where
\begin{itemize}
	\item the objects are extensions of $\pic$ by $\qic$,
	\item the 1-arrows are morphisms of extensions, i.e. additive functors between extensions,
	\item the 2-arrows are morphisms of additive functors.
\end{itemize}

Now we show which objects of the derived category $\cD^{[-1,0]}(\bS)$ correspond via the equivalence of categories~(\ref{st}) to the strictly commutative Picard $\bS$-stacks $\ker(F)$, $\coker(F)$ and $\pic=(\pic,I,J)$.

\begin{lem}\label{lem:kercoker}
Let $K=[K^{-1} \stackrel{d^K}{\rightarrow}K^0]$ and $L=[L^{-1} \stackrel{d^L}{\rightarrow}L^0]$ be complexes of $\cK^{[-1,0]}(\bS)$. 
Let $F: st(K)  \rightarrow st(L)$ be an additive functor induced by a morphism of complexes $f=(f^{-1},f^0): K \rightarrow L$.
The strictly commutative Picard $\bS$-stacks $\ker(F)$ and $\coker(F)$
correspond via the equivalence of categories~(\ref{st}) to the following complexes of $\cK^{[-1,0]}(\bS):$
\begin{eqnarray}
\label{eq:[ker]} [\ker(F)] & = & \tau_{ \leq 0}\big( MC(f)[-1]\big) = \big[K^{-1} ~ \stackrel{(f^{-1},-d^K)}{\longrightarrow} ~  \ker(d^L,f^0)\big]\\
\label{eq:[coker]} [\coker(F)] & = & \tau_{\geq -1}MC(f)  = \big[\coker(f^{-1},-d^K) ~ \stackrel{(d^L,f^0)}{\longrightarrow} ~ L^0 \big]
\end{eqnarray}
where $\tau$ denotes the good truncation and $MC(f)$ is the mapping cone of the morphism $f$.
\end{lem}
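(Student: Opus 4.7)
The plan is to prove the two formulas separately by directly unwinding the explicit descriptions of $\ker(F) = st(K) \times_{st(L)} \mathbf{1}$ and $\coker(F) = \mathbf{1} +^{st(K)} st(L)$ given in Definitions \ref{def:pull-back} and \ref{def:push-down}, and then matching the resulting complexes with the claimed truncations of the mapping cone $MC(f)$.

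For the kernel, an object of $\ker(F)(U)$ is a pair $(p,\phi)$ where $p \in K^0(U)$ (viewed as an object of $pst(K)(U)$) and $\phi \colon F(p) \stackrel{\cong}{\to} e$ in $st(L)(U)$; locally $\phi$ is an element of $L^{-1}(U)$ with $d^L\phi = -f^0(p)$, so the pair $(\phi,p)$ lies in $\ker(d^L,f^0) \subset L^{-1}\oplus K^0$. An arrow $\alpha \colon (p_1,\phi_1) \to (p_2,\phi_2)$ is an element $\alpha \in K^{-1}(U)$ with $d^K \alpha = p_2 - p_1$ and, by compatibility with $\phi$, $f^{-1}(\alpha) = \phi_2 - \phi_1$. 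Together these conditions say that $\alpha$ maps via $(f^{-1},-d^K)$ to the difference $(\phi_2-\phi_1,\, p_2-p_1)$ inside $\ker(d^L,f^0)$. Hence $\ker(F)$ is the strictly commutative Picard stack associated to the complex $[K^{-1}\stackrel{(f^{-1},-d^K)}{\longrightarrow}\ker(d^L,f^0)]$. A direct inspection of $MC(f)$---concentrated in degrees $[-2,0]$---shows that shifting by $-1$ and good-truncating at $0$ (replacing the $L^0$ in degree $1$ by $\ker(d^L,f^0)$ in degree $0$) produces precisely this complex, which is $\tau_{\leq 0}(MC(f)[-1])$.

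For the cokernel, I apply Definition \ref{def:push-down} to $\mathbf{1}+^{st(K)}st(L)$. Objects of the generating prestack $\coker'(F)(U)$ are elements $\ell \in L^0(U)$, and an arrow $\ell_1 \to \ell_2$ is the isomorphism class of a pair $(p,\beta)$ with $p \in K^0(U)$ and $\beta \in L^{-1}(U)$ satisfying $d^L\beta = \ell_2 - \ell_1 - f^0(p)$. The equivalence relation identifies $(p,\beta)$ with $(p + d^K k,\, \beta + f^{-1}(k))$ for any $k \in K^{-1}(U)$, so the set of arrows becomes the cokernel of the map $K^{-1} \to L^{-1}\oplus K^0$, $k \mapsto (f^{-1}(k),-d^K k)$, namely $\coker(f^{-1},-d^K)$; and the boundary assigning $\ell_2 - \ell_1$ to the class $[\beta,p]$ is exactly $(d^L,f^0)$. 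Thus $\coker(F) = st\bigl([\coker(f^{-1},-d^K) \stackrel{(d^L,f^0)}{\longrightarrow} L^0]\bigr) = st\bigl(\tau_{\geq -1}MC(f)\bigr)$.

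The main obstacle is the careful verification for the cokernel: tracking the passage from the generating prestack to the associated stack and showing that the equivalence relation on triples in Definition \ref{def:push-down} precisely produces the quotient of $L^{-1}\oplus K^0$ by the image of $(f^{-1},-d^K)$. Sign conventions in the mapping cone and its shift, together with the ordering of the summands in $L^{-1}\oplus K^0$ versus $K^0\oplus L^{-1}$, must also be matched consistently throughout the computation.
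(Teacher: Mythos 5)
Your proposal is correct and follows essentially the same route as the paper: reduce to comparing the generating pre-stacks, unwind the explicit descriptions of $\ker(F)$ and $\coker'(F)$, and match them element-by-element with $\tau_{\leq 0}(MC(f)[-1])$ and $\tau_{\geq -1}MC(f)$. The only point to watch in the sign bookkeeping you defer is that, because of the $-d^K$ component, an arrow of the kernel pre-stack from $(p_1,\phi_1)$ to $(p_2,\phi_2)$ corresponds to an element $\alpha$ with $(f^{-1},-d^K)(\alpha)=(\phi_1,p_1)-(\phi_2,p_2)$, i.e.\ the direction is reversed relative to $pst(K)$ (the paper records this as $g\colon k'\to k$); this does not affect the conclusion.
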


\begin{proof} It is enough to show that the strictly commutative $\bS$-pre-stacks associated to $\coker(F)$ is equivalent to the one associated to $\tau_{\geq -1}MC(f)$, since for each strictly commutative $\bS$-pre-stack $\pic$, the strictly commutative $\bS$-stack generated by $\pic$ is unique modulo a unique equivalence (idem for $\ker(F)$). Explicitelly $MC(f)$ is the complex  
$$ \qquad 0 \longrightarrow K^{-1}~ \stackrel{(f^{-1},-d^K)}{\longrightarrow}
~ L^{-1} + K^0 ~ \stackrel{(d^L,f^0)}{\longrightarrow}
~ L^0 \longrightarrow 0$$
concentrated in degree -2,-1 and 0. \\
Let $U$ be an object of $\bS$. The objects of $pst(\tau_{\geq -1}MC(f))(U)$ are the elements of $L^0(U)$ and so they are the same objects of $pst(L)(U)$. Moreover, if $l$ and $l'$ are two objects of
$pst(\tau_{\geq -1}MC(f))(U)$, an arrow of $pst(\tau_{\geq -1}MC(f))(U)$ from $l$ to $l'$ is an isomorphism class of pairs $(\alpha,k)$ with
$k$ an object of $K^0(U)$ and $\alpha$ an object of $L^{-1}(U)$ such that 
\[ (d^L,f^0)(\alpha,k) =l'-l
\] 
This equality can be rewritten as $d^L(\alpha) =l'-(l+f^0(k))$. Therefore an arrow 
from $l$ to $l'$ is an isomorphism class of pairs $(\alpha,k)$ with
$k$ an object of $pst(K)(U)$ and $\alpha: l +F(k) \rightarrow l'$ an arrow of $pst(L)(U)$. According to Definition \ref{def:kercoker}, we can conclude that  
  $pst(\tau_{\geq -1}MC(f)) \cong \coker'(F)$.\\
The objects of $pst(\tau_{ \leq 0}\big( MC(f)[-1]\big))(U)$ are pairs $(f,k)$ with $k$ an object of $pst(K)(U)$ and 
$f:F(k) \rightarrow e_{pst(L)}$ an isomorphism from $F(k)$ to the neutral object $e_{pst(L)}$ of $pst(L)$. If $(f,k)$ and $(f',k')$ are two objects of $pst(\tau_{ \leq 0}\big( MC(f)[-1]\big))(U) $, an arrow of $pst(\tau_{ \leq 0}\big( MC(f)[-1]\big))(U) $ from $(f,k)$ to $(f',k')$ is an element $ g$ of $K^{-1}(U)$ such that 
\[ (f^{-1},-d^K)(g) =(f',k')-(f,k).
\] 
This equality implies the equalities $f^{-1}(g)=f'-f$ and $-d^K(g) =k'-k$. Therefore 
$g: k' \rightarrow k$ is an arrow of $pst(K)(U)$ such that the following diagram is commutative:
\[    \xymatrix{
     F(k')   \ar[rd]_{f} \ar[rr]^{F(g)}&  & F(k) \ar[ld]^{f'}  \\
  &  e_{pst(L)}  & 
}\] 
According to Definition \ref{def:kercoker}, we can conclude that  
  $pst(\tau_{ \leq 0}\big( MC(f)[-1]\big)) \cong \ker(F)$.\\
\end{proof}

By the above Lemma, the following notion of extension in $\cK^{[-1,0]}(\bS)$ is equivalent to Definition \ref{def:extpic} through the equivalence of categories (\ref{st}):

\begin{defn}\label{def:extcom}
Let
\[ K \stackrel{i}{\longrightarrow} L \stackrel{j}{\longrightarrow} M \]
be morphisms of complexes of $\cK^{[-1,0]}(\bS)$. The complex $L$ is an \emph{extension} of $M$ by $K$ if $j \circ i=0$ and 
the following equivalent conditions are satisfied: 
\begin{description}
	\item[(a)] ${\h}^{0}(j): {\h}^{0}(L) \rightarrow {\h}^{0}(M)$ is surjective and $i$ induces a quasi-iso\-mor\-phism between $K$ and $ \tau_{\leq 0} (MC(j)[-1])$;
      \item[(b)] ${\h}^{-1}(i): {\h}^{-1}(K) \rightarrow {\h}^{-1}(L)$ is injective and $j$ induces a quasi-iso\-mor\-phism between $ \tau_{\geq -1} MC(i)$ and $M$.
\end{description}
\end{defn}

\begin{rem}\label{rem:exact-ext} Consider a short exact sequence of complexes in $\cK^{[-1,0]}(\bS)$
\[0 \longrightarrow  K \stackrel{i}{\longrightarrow} L \stackrel{j}{\longrightarrow} M \longrightarrow 0.\]
It exists a distinguished triangle $K  \stackrel{i}{\rightarrow}L \stackrel{j}{\rightarrow} M \rightarrow +$ in $\cD(\bS)$, and $M$ is isomorphic to $MC(i)$ in $\cD(\bS)$. Therefore a short exact sequence of complexes 
in $\cK^{[-1,0]}(\bS)$ is an extension of complexes of $\cK^{[-1,0]}(\bS)$ according to the above definition.  
\end{rem}

\begin{rem} If $K=[K^{-1} \stackrel{0}{\rightarrow} K^0]$ and $M=[M^{-1} \stackrel{0}{\rightarrow} M^0]$, then an extension of $M$ by $K$ consists of an extension of $M^0$ by $K^0$ and an extension of $M^{-1}$ by $K^{-1}.$
\end{rem}

\begin{rem}
Assume that $st(L)=(st(L),I,J)$ is an extension of $st(M)$ by $st(K)$.
Since ${\h}^{-1}(i) $ is injective and ${\h}^{1}(K)=0$, the distinguished triangle $K  \stackrel{i}{\rightarrow}L \rightarrow MC(i) \rightarrow +$
furnishes the long exact sequence
\[ 0 \longrightarrow {\h}^{-1}(K) \stackrel{{\h}^{-1}(i)}{\longrightarrow} {\h}^{-1}(L) \longrightarrow {\h}^{-1}(\tau_{\geq -1} MC(i)) \longrightarrow \]
\[ \longrightarrow {\h}^{0}(K) \stackrel{{\h}^{0}(i)}{\longrightarrow} {\h}^{0}(L) \longrightarrow {\h}^{0}(\tau_{\geq -1} MC(i)) \longrightarrow 0. 
\]
Because of the equality $\tau_{\geq -1} MC(i) = M$ in $\cD(\bS)$, we see that the above long exact sequence is just the long exact sequence (\ref{prop:longexseq}).
\end{rem}

\section{Operations on extensions of strictly commutative Picard stacks}

Using the results of \S 2 we define the pull-back and the push-down of extensions of strictly commutative Picard $\bS$-stacks via additive functors. 
Let $\eic=(\eic,I: \qic \rightarrow \eic,J: \eic \rightarrow \pic)$ be an extension of $\pic$ by $\qic$.

\begin{defn} The \emph{pull-back $F^*\eic$ of the extension $\eic$ via an additive functor $F:\pic' \rightarrow \pic$} is the fibered product $\eic \times_\pic \pic'$
of $\eic$ and $\pic'$ over $\pic$ via $J$ and $F$.
\end{defn}

\begin{lem} 
The pull-pack $F^*\eic $ of $\eic$ via $F$ is an extension of $\pic'$ by $\qic.$
\end{lem}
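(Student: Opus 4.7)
The plan is to endow $F^*\eic = \eic \times_\pic \pic'$ with the structure of an extension of $\pic'$ by $\qic$ and then verify condition (a) of Definition \ref{def:extpic}.

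First, I would take $J' := Pr_2 : F^*\eic \to \pic'$ as the projecting additive functor. To produce $I' : \qic \to F^*\eic$, I would invoke the universal property of the fibered product applied to the additive functors $I : \qic \to \eic$ and the trivial functor $\mathbf{0} : \qic \to \pic'$ (factoring through $\mathbf{1}$), using the isomorphism of additive functors $J \circ I \cong 0 \cong F \circ \mathbf{0}$ obtained by combining the isomorphism $J \circ I \cong 0$ given by the extension $\eic$ with the canonical isomorphism $F(e_{\pic'}) \cong e_\pic$ coming from the additivity of $F$. This universal property produces a unique $I' : \qic \to F^*\eic$ with $Pr_1 \circ I' \cong I$ and $Pr_2 \circ I' \cong \mathbf{0}$; the second isomorphism provides the third datum $J' \circ I' \cong 0$ required of an extension.

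Next, I would verify condition (a). For surjectivity of $\pi_0(J')$: given a local section $p'$ of $\pic'$, the section $F(p')$ of $\pic$ is hit, locally, by some $e \in \eic$ together with an isomorphism $f : J(e) \cong F(p')$, because $\pi_0(J)$ is surjective by hypothesis; then $(e,p',f)$ is an object of $F^*\eic$ with $J'(e,p',f) = p'$. For the equivalence $\qic \cong \ker(J')$ induced by $I'$: I would construct a canonical equivalence $\ker(Pr_2) \cong \ker(J)$ sending an object $\bigl((e,p',f),\, g : p' \cong e_{\pic'}\bigr)$ of $\ker(Pr_2)$ to $(e,\, F(g) \circ f)$, where $F(g) \circ f : J(e) \to F(e_{\pic'}) \cong e_\pic$ is the required trivialization. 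Composing this equivalence with the equivalence $I : \qic \cong \ker(J)$ supplied by the extension $\eic$, and noting that the composite coincides with $I'$ by the universal property used to define $I'$, yields $I' : \qic \cong \ker(J')$.

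The main obstacle is bookkeeping of 2-isomorphisms: one must check that the constructed equivalence $\ker(Pr_2) \cong \ker(J)$ is additive, and that the composite $\qic \stackrel{I'}{\to} \ker(Pr_2) \cong \ker(J)$ agrees with $I$ up to a natural isomorphism of additive functors compatible with the associativity and commutativity constraints. However, since every step is either an instance of the universal property of the fibered product or a routine manipulation of the structural isomorphisms $\sigma$ and $\tau$, this reduces to a diagram chase. Alternatively, one may translate the statement through the equivalence (\ref{st}) and verify it at the level of complexes using the explicit description of $[\pic \times_\qic \pic']$ and of $[\ker(F)]$ from Lemma \ref{lem:kercoker}.
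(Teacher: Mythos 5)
Your proof is correct and follows essentially the same route as the paper: the paper identifies $\ker(Pr)$ with $\ker(J)$ via the associativity of fibered products, $\eic \times_\pic \pic' \times_{\pic'} \mathbf{1} \cong \eic \times_\pic \mathbf{1}$, which is exactly the equivalence you write out explicitly on objects as $\bigl((e,p',f),g\bigr) \mapsto (e, F(g)\circ f)$, and both arguments conclude by noting that $\pi_0$-surjectivity is preserved under pullback. Your explicit construction of $I'$ via the universal property is a welcome extra detail that the paper leaves implicit when it writes the resulting extension as $(F^*\eic, I, Pr)$.
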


\begin{proof} Denote by $Pr: F^*\eic \rightarrow \pic' $ the additive functor underlying the pull-back of $\eic$ via $F$. Composing the equivalence of strictly commutative Picard $\bS$-stacks $\qic \cong \ker(J)= \eic \times_\pic \mathbf{1}$ with the natural equivalence of strictly commutative Picard $\bS$-stacks $ \eic \times_\pic \mathbf{1} \cong \eic \times_\pic  \pic' \times_{\pic'} \mathbf{1} = \ker(Pr)$, we get that $\qic$ is equivalent to the 
strictly commutative Picard $\bS$-stack $\ker(Pr)$. Moreover the surjectivity of
$\pi_0(J): \pi_0(\eic) \rightarrow \pi_0(\pic)$ implies the surjectivity of  $\pi_0(Pr): \pi_0(F^*\eic) \rightarrow \pi_0(\pic') $.
Hence $(F^*\eic,I,Pr) $ is an extension of $\pic'$ by $\qic$. 
\end{proof}

\begin{defn} The \emph{push-down $G_*\eic$ of the extension $\eic$ via an additive functor $G:\qic \rightarrow \qic'$} is the fibered sum $\eic +_\qic \qic'$ of $\eic$ and $\qic'$ under $\qic$ via $G$ and $I$.
\end{defn}

\begin{lem} \label{lem:push-down}
The push-down $G_*\eic $ of $\eic$ via $G$ is an extension of $\pic$ by $\qic'.$
\end{lem}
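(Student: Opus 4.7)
The plan is to dualize the proof of the pull-back lemma that immediately precedes this statement. I would proceed in three steps.

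First, I will construct the additive functor underlying the candidate extension. The universal property of the push-down $G_*\eic=\eic+^\qic\qic'$ applied to $J:\eic\to\pic$ and the trivial additive functor $0:\qic'\to\pic$ — which are compatible on $\qic$ via the given isomorphism $J\circ I\cong 0=0\circ G$ — yields an essentially unique additive functor $J':G_*\eic\to\pic$ equipped with isomorphisms $J'\circ In_1\cong J$ and $J'\circ In_2\cong 0$. Together with $In_2:\qic'\to G_*\eic$, this produces the triple $(G_*\eic,In_2,J')$ which I claim is the sought extension.

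Second, I will verify condition (b) of Definition \ref{def:extpic}, starting with the equivalence $\coker(In_2)\cong\pic$ induced by $J'$. Using the associativity of fibered sums (an iterated push-out is the colimit of the underlying diagram, independent of the order of computation), I obtain
\[
\coker(In_2)=\mathbf{1}+^{\qic'}(\eic+^\qic\qic')\cong\eic+^\qic(\mathbf{1}+^{\qic'}\qic')\cong\eic+^\qic\mathbf{1}=\coker(I),
\]
where $\mathbf{1}+^{\qic'}\qic'\cong\mathbf{1}$ collapses along the identity map. Since $\eic$ is an extension of $\pic$ by $\qic$, the functor $J$ factors through an equivalence $\coker(I)\cong\pic$, giving the desired equivalence $\coker(In_2)\cong\pic$ via $J'$.

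Third, I will check that $\pi_1(In_2):\pi_1(\qic')\to\pi_1(G_*\eic)$ is injective. Transporting to the derived category via the equivalence $(\ref{st})$, $[G_*\eic]$ is the homotopy push-out of $[I]$ and $[G]$, so there is a Mayer--Vietoris distinguished triangle
\[
[\qic]\xrightarrow{([I],-[G])}[\eic]\oplus[\qic']\longrightarrow[G_*\eic]\xrightarrow{+}
\]
in $\cD(\bS)$. Taking $\h^{-1}$ and translating via Remark \ref{012}, the relevant segment reads $\pi_1(\qic)\to\pi_1(\eic)\oplus\pi_1(\qic')\to\pi_1(G_*\eic)$. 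Since $\pi_1(I)$ is injective by hypothesis, any element $(0,y)$ lifting to $\pi_1(\qic)$ must come from $x\in\pi_1(\qic)$ with $\pi_1(I)(x)=0$, hence $x=0$ and $y=0$, proving injectivity of $\pi_1(In_2)$.

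The main obstacle is the 2-categorical bookkeeping: one must verify that the isomorphisms of additive functors assemble coherently so that $J'$ is well defined up to unique isomorphism, and that the associativity isomorphism for iterated push-downs is natural enough to provide a genuine equivalence of strictly commutative Picard $\bS$-stacks (not merely a sheaf-level identification). Both points follow the template laid down for the pull-back case, but require care with the extra layer of 2-arrows inherent to the push-down construction via the generating pre-stack of Definition \ref{def:push-down}.
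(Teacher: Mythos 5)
Your proposal is correct and follows essentially the same route as the paper: the key step in both is the identification $\coker(In_2)\cong \mathbf{1}+^{\qic'}(\eic+^{\qic}\qic')\cong \eic+^{\qic}\mathbf{1}=\coker(I)\cong\pic$ via associativity of iterated fibered sums, combined with transferring the injectivity of $\pi_1(I)$ to $\pi_1(In_2)$. You supply two details the paper leaves implicit or merely asserts --- the construction of $J'$ from the universal property and the Mayer--Vietoris argument for injectivity of $\pi_1(In_2)$ (which is valid, since an element $(0,y)$ killed in $\h^{-1}$ of the cone lifts to $x\in\pi_1(\qic)$ with $\pi_1(I)(x)=0$) --- but the overall structure is the same.
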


\begin{proof} Denote by $In: \qic' \rightarrow  G_*\eic$ the additive functor underlying the push-down of $\eic$ via $G$. Composing the equivalence of strictly commutative Picard $\bS$-stacks $\coker(I) \cong \pic= \mathbf{1} +^\qic \eic $ with the natural equivalence of strictly commutative Picard $\bS$-stacks $ \mathbf{1}  +^\qic \eic  \cong \mathbf{1} +^{\qic'} \qic' +^\qic \eic = \coker(In)$, we get that $\pic$ is equivalent to the 
strictly commutative Picard $\bS$-stack $\coker(In)$. Moreover the injectivity of
$\pi_1(I): \pi_0(\qic) \rightarrow \pi_1(\eic)$ implies the injectivity of  $\pi_1(In): \pi_1(\qic') \rightarrow \pi_1(G_*\eic) $.
Hence $(G_*\eic,In,P) $ is an extension of $\qic'$ by $\pic$. 
\end{proof}

Before to define a group law for extensions of $\pic$ by $\qic$, we need the following

\begin{lem}\label{lem:product} Let $\eic$ be an extension of $\pic$ by $\qic$ and let $\eic'$ be an extension of $\pic'$ by $\qic'$. Then 
$\eic \times \eic'$ is an extension of $\pic \times \pic'$ by $\qic \times \qic'.$
\end{lem}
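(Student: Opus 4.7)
The plan is to build the extension structure on $\eic \times \eic'$ componentwise and then verify Definition \ref{def:extpic} by reducing each condition to the corresponding condition already known for $\eic$ and $\eic'$. Write $\eic = (\eic, I, J)$ and $\eic' = (\eic', I', J')$. I will take the two structural additive functors to be
\[
I \times I' : \qic \times \qic' \longrightarrow \eic \times \eic', \qquad J \times J' : \eic \times \eic' \longrightarrow \pic \times \pic',
\]
defined on objects by $(q,q') \mapsto (I(q), I'(q'))$ and $(e,e') \mapsto (J(e), J'(e'))$, and similarly on arrows. The isomorphism of additive functors $(J \times J') \circ (I \times I') \cong 0$ is the product of the isomorphisms $J \circ I \cong 0$ and $J' \circ I' \cong 0$ which come with $\eic$ and $\eic'$.

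Next I would verify condition (a) of Definition \ref{def:extpic}. The surjectivity of $\pi_0(J \times J')$ is immediate from the identity $\pi_0(\eic \times \eic') = \pi_0(\eic) \times \pi_0(\eic')$ (noted in Section 2) together with the surjectivity of $\pi_0(J)$ and $\pi_0(J')$. For the kernel statement, the key observation is that the fibered product defining the kernel commutes with the product of strictly commutative Picard $\bS$-stacks: unwinding Definition \ref{def:pull-back} and Definition \ref{def:kercoker} gives a canonical equivalence
\[
\ker(J \times J') \;\cong\; \ker(J) \times \ker(J'),
\]
and composing with the product of the equivalences $\qic \cong \ker(J)$, $\qic' \cong \ker(J')$ furnished by the assumption that $\eic$ and $\eic'$ are extensions, we obtain the required equivalence $\qic \times \qic' \cong \ker(J \times J')$ induced by $I \times I'$. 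This verifies condition (a), so $\eic \times \eic'$ is indeed an extension of $\pic \times \pic'$ by $\qic \times \qic'$.

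I expect no substantive obstacle; the only point that requires a small check (rather than a mere invocation of a previous statement) is the natural equivalence $\ker(J \times J') \cong \ker(J) \times \ker(J')$, which follows either directly from Definition \ref{def:pull-back} applied to the products, or, via the equivalence $st$, from the elementary fact that the mapping cone shift $\tau_{\leq 0}(MC(-)[-1])$ described in Lemma \ref{lem:kercoker} is additive in the morphism, so it commutes with the direct sum decomposition $[\eic \times \eic'] = [\eic] \oplus [\eic']$. One could equally well verify condition (b) by a symmetric argument using $\pi_1$ and $\coker$, but condition (a) suffices.
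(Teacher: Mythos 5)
Your proposal is correct and follows essentially the same route as the paper: both verify condition (a) of Definition \ref{def:extpic} by observing that surjectivity on $\pi_0$ (equivalently on ${\h}^0$) holds componentwise and that the kernel construction commutes with products. The paper simply carries out this verification entirely on the complex side via the equivalence $st$, using the additivity of the mapping cone $\tau_{\leq 0}\big(MC(j+j')[-1]\big)=\tau_{\leq 0}\big(MC(j)[-1]\big)+\tau_{\leq 0}\big(MC(j')[-1]\big)$ --- which is exactly the alternative justification you yourself give for the equivalence $\ker(J\times J')\cong\ker(J)\times\ker(J')$.
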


\begin{proof} Via the equivalence of categories (\ref{st}), we have that the complex $[\eic]=([\eic],i,j)$ (resp. $[\eic']=([\eic'],i',j')$) is an extension of $[\pic]$ by $[\qic]$ ( resp. an extension of $[\pic']$ by $[\qic']$) in the derived category $\cD(\bS)$. Therefore ${\h}^0(j+j')= {\h}^0(j) +{\h}^0(j'): {\h}^0([\eic]+[\eic']) \rightarrow {\h}^0([\pic] +[\pic'])$ is surjective. Moreover 
$i+i'$ induces an iso\-mor\-phism in $\cD(\bS)$ between $[\qic]+[\qic']$ and 
$$ \tau_{\leq 0} \big(MC(j)[-1]\big)+\tau_{\leq 0} \big(MC(j')[-1]\big)= \tau_{\leq 0} \big(MC(j+j')[-1]\big).$$ 
Hence we can conclude that $[\eic \times \eic' ]=([\eic \times \eic' ],i+i',j+j') $ is an extension of $[\pic\times \pic']$ by $[\qic \times \qic'].$
\end{proof}

Let $\eic$ and $\eic'$ be two extensions of $\pic$ by $\qic$. According to the above lemma, the product $\eic \times \eic'$ is an extension of the product $\pic \times \pic$ by the product $\qic \times \qic$.

\begin{defn} \label{def:+} 
 The \emph{sum $\eic + \eic'$ of the extensions $\eic$ and $\eic'$} is the following extension of $\pic$ by $\qic$
\begin{equation}
D^* +_* ( \eic \times \eic')
\label{eq:+}
\end{equation}
 where $D: \pic \rightarrow \pic \times \pic $ is the diagonal additive functor and $+: \qic \times_\bS \qic \rightarrow \qic$ is the functor underlying the strictly commutative Picard $\bS$-stack $\qic=(\qic,+,\sigma,\tau)$.
\end{defn}

\begin{lem}\label{lem:sumofext}
The above notion of sum of extensions defines on the set of equivalence classes of extensions of $\pic$ by $\qic$ an associative, commutative group law with neutral object, that we denote $\pic \times \qic$.
\end{lem}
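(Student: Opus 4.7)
The plan is to verify the four group axioms by direct inspection, using the universal properties of the pull-back and push-down (Definitions \ref{def:pull-back} and \ref{def:push-down}) together with the associativity and commutativity constraints $\sigma,\tau$ of the Picard stacks $\pic$ and $\qic$. Well-definedness of the sum on equivalence classes of extensions follows immediately from the 2-functoriality of the construction $D^{*}(+)_{*}(-)$: equivalences of extensions $\eic_1 \cong \eic_2$ and $\eic'_1 \cong \eic'_2$ yield by Lemma \ref{lem:product} an equivalence $\eic_1 \times \eic'_1 \cong \eic_2 \times \eic'_2$, which is preserved by pull-back and push-down along fixed additive functors.

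Commutativity follows from the swap equivalence $\eic \times \eic' \cong \eic' \times \eic$ together with the identities $D = (\mathrm{swap}_\pic) \circ D$ (via $\tau_\pic$) and $+ = (+) \circ (\mathrm{swap}_\qic)$ (via $\tau_\qic$), which make the swap compatible with the diagonal on $\pic \times \pic$ and with the sum on $\qic \times \qic$. Associativity is the standard Baer-sum argument: writing $D_3 : \pic \rightarrow \pic^{3}$ for the triple diagonal and $+_3 : \qic^{3} \rightarrow \qic$ for the triple sum, one shows via the universal properties of pull-back and push-down that both $(\eic_1 + \eic_2) + \eic_3$ and $\eic_1 + (\eic_2 + \eic_3)$ are canonically equivalent to $D_3^{*}(+_3)_{*}(\eic_1 \times \eic_2 \times \eic_3)$, the necessary coherences coming from $\sigma_\pic$ and $\sigma_\qic$.

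For the neutral object, take the trivial extension $\qic \stackrel{I_0}{\longrightarrow} \pic \times \qic \stackrel{J_0}{\longrightarrow} \pic$ with $I_0(q) = (e_\pic, q)$ and $J_0(p,q) = p$. Unwinding $\eic + (\pic \times \qic)$, the pullback by $D$ of $\eic \times (\pic \times \qic)$ has objects the 4-tuples $(x,p,q,\phi : J(x)\cong p)$ and is equivalent, via the functor that forgets the redundant component $p$, to $\eic \times \qic$ viewed as an extension of $\pic$ by $\qic \times \qic$ through $J \circ \mathrm{pr}_1$. The subsequent push-down by $+ : \qic \times \qic \rightarrow \qic$ is then equivalent to $\eic$ via the additive functor $(x, q) \mapsto x + I(q)$, compatibility with the extension maps being checked using only the isomorphism $J \circ I \cong 0$. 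The inverse of $\eic$ is the push-down $([-1]_\qic)_{*}\,\eic$ along the inversion $[-1]_\qic : \qic \rightarrow \qic$, $q \mapsto -q$, which exists because every object of a strictly commutative Picard stack is invertible. The identification $\eic + ([-1]_\qic)_{*}\,\eic \cong \pic \times \qic$ follows from the fact that push-down along the composite $(+) \circ (\mathrm{id}_\qic \times [-1]_\qic) : \qic \times \qic \rightarrow \qic$, $(q,q') \mapsto q - q'$, together with pull-back along $D$, produces an extension that is canonically split by the diagonal $\eic \rightarrow \eic \times_\pic \eic$.

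The main obstacle is the verification of the neutral-element and inverse axioms, which require a delicate chase through the explicit construction of the push-down as the Picard stack generated by a prestack (Definition \ref{def:push-down}) and the identification of equivalence classes of triples $(q,\alpha,\beta)$ with arrows in $\eic$; the commutativity and associativity axioms, by contrast, are formal consequences of the universal properties and of the underlying symmetric monoidal structure of $\uPicard(\bS)$.
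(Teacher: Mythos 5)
Your proposal is correct and follows the same basic route as the paper, but it is substantially more complete: the paper's own proof only identifies the neutral object as the product of the two trivial extensions $\pic=(\pic,\mathbf{1},Id)$ and $\qic=(\qic,Id,\mathbf{1})$ via Lemma \ref{lem:product}, declares commutativity "clear from the formula", and explicitly leaves the associativity computation (via the triple diagonal $D:\pic\rightarrow\pic\times\pic\times\pic$ and the triple sum $+:\qic\times_\bS\qic\times_\bS\qic\rightarrow\qic$, exactly as you set it up) to the reader. What you add beyond the paper is genuinely valuable: (i) an actual verification that $\eic+(\pic\times\qic)\cong\eic$, via the functor $(x,q)\mapsto x+I(q)$ out of the push-down, which the paper never carries out; and (ii) the existence of inverses via $([-1]_\qic)_*\eic$ and the antidiagonal splitting, which the paper omits entirely even though the statement asserts a \emph{group} law — this is arguably a gap in the paper's own proof that your argument fills. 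Two small points you should make explicit if you write this up: Definition \ref{def:+} applies $+_*$ before $D^*$, whereas your neutral-element computation does the pull-back first, so you need the (routine, but universal-property-based) fact that push-down and pull-back of extensions commute up to equivalence; and in the inverse argument the splitting $\pic\rightarrow(+\circ(\mathrm{id}\times[-1]))_*(\eic\times_\pic\eic)$ is obtained by descending the diagonal $\eic\rightarrow\eic\times_\pic\eic$ through $\coker(I)\cong\pic$, using that the composite on $\qic$ is killed by $q\mapsto q-q$, rather than by a naive section of $J$.
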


\begin{proof} Neutral object: it is the product $\pic \times \qic$ of the extension $\pic=(\pic, \mathbf{1}:\mathbf{1} \rightarrow \pic, Id:\pic \rightarrow \pic)$ of $\pic$ by $\mathbf{1}$ with the extension 
	$\qic=(\qic, Id:\qic \rightarrow \qic, \mathbf{1}:\qic \rightarrow \mathbf{1})$ of $\mathbf{1}$ by $\qic$. Lemma \ref{lem:product} provides that such a product is an extension of $\pic \times \mathbf{1} \cong \pic$ by $\qic \times \mathbf{1} \cong \qic$. Commutativity: it is clear from the formula (\ref{eq:+}).
Associativity: Consider three extensions $\eic, \eic',\eic''$ of $\pic$ by $\qic$.
Using the functor $+: \qic \times_\bS  \qic \times_\bS \qic \rightarrow \qic$ and the diagonal functor $D: \pic \rightarrow \pic \times \pic \times \pic$, it is enough  to show that the extensions $(\eic+ \eic')+\eic''$ and $\eic + (\eic'+\eic'')$ are equivalent. We left this computation to the reader.
\end{proof}

This last Lemma implies that if $\oic,\pic$ and $\qic$ are three strictly commutative Picard $\bS$-stacks, we have the following equivalence of 2-categories
\[{\cExt}(\oic \times \pic,\qic) \cong {\cExt}(\oic,\qic) \times {\cExt}(\pic,\qic),\]
\[{\cExt}(\oic,\pic \times \qic) \cong {\cExt}(\oic,\pic) \times {\cExt}(\oic,\qic).\]
A 2-groupoid is a 2-category whose 1-arrows are invertible up to a 2-arrow and whose 2-arrows are strictly invertible.
An $\bS$-2-stack in 2-groupoids $\PP$ is a fibered 2-category in 2-groupoids over $\bS$ such that 
\begin{itemize}
	\item for every pair of objects $X,Y$ of the 2-category $\PP(U)$, the fibered category of morphisms $\mathrm{Arr}_{\PP(U)}(X,Y)$ of $\PP(U)$ is a $U$-stack (called the $U$-stack of morphisms);
	\item 2-descent is effective for objects in $\PP$.
\end{itemize}
See \cite{B09} \S 6 for more details.
A strictly commutative Picard $\bS$-2-stack is the 2-analog of a
strictly commutative
Picard $\bS$-stack, i.e. it is an $\bS$-2-stack in 2-groupoids $\PP$ endowed with a morphism of $\bS$-2-stacks $ +: {\PP} \times_{\bS} {\PP} \rightarrow {\PP}$ and with
associative and commutative constraints (see \cite{T} Definition 2.3 for more details).
With these notation Lemma \ref{lem:sumofext} implies that extensions of $\pic$ by $\qic$ form a strictly commutative
Picard $\bS$-2-stack $\underline{{\cExt}}(\pic,\qic)$
where
\begin{itemize}
	\item for any object $U$ of $\bS$, the objects of the 2-category $\underline{{\cExt}}(\pic,\qic)(U)$ are extensions
of $\pic_{|U}$ by $\qic_{|U}$, its 1-arrows are additive
functors between such extensions and its 2-arrows are morphisms of additive
functors. In particular if $\eic$ and $\eic'$ are two objects of $\underline{{\cExt}}(\pic,\qic)(U)$, the $U$-stack of morphisms from $\eic$ to $\eic'$ is the $U$-stack ${\HOM}(\eic,\eic')$;
      \item the functor $+ :\underline{{\cExt}}(\pic,\qic)  \times \underline{{\cExt}}(\pic,\qic) \rightarrow \underline{{\cExt}}(\pic,\qic)$ is defined by the
formula (\ref{eq:+}).
\end{itemize}
As for strictly commutative Picard $\bS$-stacks and complexes of abelian sheaves concentrated in degrees -1 and 0, in \cite{T} Tatar proves that there is a dictionary between strictly commutative Picard $\bS$-2-stacks and complexes of abelian sheaves concentrated in degrees -2, -1 and 0. The complex 
of abelian sheaves associated to the strictly commutative
Picard $\bS$-2-stack $\underline{{\cExt}}(\pic,\qic)$
is $\tau_{\leq 0}{\R}{\Hom}\big([\pic],[\qic][1]\big).$


\section{Proof of the main theorem}

In this section we use the same notation as in the introduction.

\begin{defn}
Two extensions $\eic$ and $\eic'$ of $\pic$ by $\qic$
are \emph{equivalent as extensions of $\pic$ by $\qic$} if there is 
\begin{enumerate}
	\item an additive functor $F:\eic \rightarrow \eic'$ and 
	\item  two isomorphisms of additive functors $J' \circ F \cong Id_\pic \circ J$ and $F \circ I \cong I' \circ Id_\qic$,
\end{enumerate}
which are compatible with the isomorphisms of additive functors $J \circ I \cong 0$ and $J' \circ I' \cong 0$ underlying the extensions $\eic$ and $\eic'$ (see \ref{def:morfismi}).
\end{defn}

The additive functor $F$ furnishes the following commutative diagram modulo isomorphisms of additive functors 
\[\xymatrix{
   \qic  \ar[d]_{Id_\qic} \ar[r]^{I} & \eic \ar[d]_{F} \ar[r]^{J} & \pic  \ar[d]^{Id_\pic}  \\
  \qic \ar[r]_{I'} & \eic' \ar[r]_{J'} & \pic .
}
\]

\begin{defn}
An extension of $\pic$ by $\qic$ is \emph{split} if it is equivalent as extension of $\pic$ by $\qic$ to the neutral object $\pic \times \qic$ of the group law defined in \ref{def:+}. 
\end{defn}

\emph{Proof of Theorem \ref{thm:intro}} \textbf{b} and \textbf{c}. Let $\eic=(I:\qic \rightarrow \eic,J:\eic \rightarrow \pic)$ be an extension of $\pic$ by $\qic$.
The strictly commutative Picard $\bS$-stacks ${\HOM}(\pic,\qic)$ and ${\HOM}(\eic,\eic)$ are equivalent as strictly commutative Picard $\bS$-stacks via the following additive functor 
\begin{eqnarray}
\nonumber {\HOM}(\pic,\qic) &\longrightarrow & {\HOM}(\eic,\eic) \\
 \nonumber  F & \mapsto & \big(\eic \rightarrow \eic + IFJ \eic \big).
\end{eqnarray}
By Lemma \ref{lem:HOM} we can conclude that $[{\HOM}(\eic,\eic)]= \tau_{\leq 0}{\R}{\Hom}\big([\pic],[\qic]\big)$, i.e. the group of isomorphism classes of additive functors from $\eic$ to itself is isomorphic to the group ${\Hom}_{\cD(\bS)}([\pic],[\qic])$, and the 
 group of automorphisms of an additive functor from $\eic$ to itself is isomorphic to the group ${\Hom}_{\cD(\bS)}([\pic],[\qic][-1])$ (for this last isomorphism see in particular (\ref{eq:homotopies})). \\

\emph{Proof of Theorem \ref{thm:intro}} \textbf{a}. First we construct a morphism from the group ${\cExt}^1(\pic,\qic)$ of equivalence classes of extensions of $\pic$ by $\qic$ to the group ${\Ext}^1([\pic],[\qic])$
\[ \Theta: {\cExt}^{1}(\pic,\qic) \longrightarrow {\Ext}^1([\pic],[\qic])
\]
and a morphism from the group ${\Ext}^1([\pic],[\qic])$ to the group ${\cExt}^1(\pic,\qic)$
\[ \Psi:  {\Ext}^1([\pic],[\qic]) \longrightarrow {\cExt}^{1}(\pic,\qic)
\]
 Then we check that $\Theta \circ \Psi = Id = \Psi \circ \Theta $ and that $\Theta$ is an homomorphism of groups.
 
(1) Construction of $\Theta$: Consider an extension $\eic=(I:\qic \rightarrow \eic,J:\eic \rightarrow \pic)$ of $\pic$ by $\qic$ and denote by $L=(i:K\rightarrow L,j:L\rightarrow M)$ the corresponding extension of complexes in $\cD^{[-10]}(\bS)$.  By definition we have the equality $K=\tau_{\leq 0} ( MC(j)[-1])$ in the category $\cD^{[-10]}(\bS)$. Hence the distinguished triangle $MC(j)[-1] \rightarrow L \stackrel{j}{\rightarrow} M \rightarrow +$ furnishes the long exact sequence
\begin{equation} \label{eq:thm1}
\cdots \rightarrow {\Hom}(M,K) \rightarrow  {\Hom}(M,L) \stackrel{j \circ}{\rightarrow}   {\Hom}(M,M) \stackrel{\partial}{\rightarrow}  {\Ext}^1(M,K) \rightarrow \cdots 
\end{equation}
We set 
$$\Theta(\eic)= \partial (id_M).$$
 The naturality of the connecting map $\partial$ implies that $\Theta(\eic)$ depends only on the equivalence class of the extension $\eic$.

\begin{lem} If ${\Ext}^1([\pic],[\qic])=0$, then every extension of $\pic$ by $\qic$ is split.
\end{lem}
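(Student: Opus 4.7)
The plan is to use the long exact sequence (\ref{eq:thm1}) that has just been introduced in the proof of Theorem~\ref{thm:intro}(a). Given any extension $\eic = (I : \qic \to \eic, J : \eic \to \pic)$ of $\pic$ by $\qic$ corresponding to an extension $K \xrightarrow{i} L \xrightarrow{j} M$ in $\cD^{[-1,0]}(\bS)$, the class $\Theta(\eic) = \partial(\mathrm{id}_M)$ lives in $\Ext^1(M,K) = \Ext^1([\pic],[\qic])$, which by hypothesis is zero. Hence $\partial(\mathrm{id}_M) = 0$, and by exactness of (\ref{eq:thm1}) there exists $s \in \Hom_{\cD(\bS)}(M,L)$ with $j \circ s = \mathrm{id}_M$.

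Next I would translate the section $s$ back to the Picard side via the equivalence (\ref{st}): it yields an additive functor $S : \pic \to \eic$ together with an isomorphism of additive functors $J \circ S \cong \mathrm{id}_\pic$. Define
\[
\Phi : \pic \times \qic \longrightarrow \eic, \qquad (p,q) \longmapsto S(p) + I(q).
\]
I would then check that $\Phi$ is a morphism of extensions from $\pic \times \qic$ to $\eic$ in the sense of Definition~\ref{def:morfismi}: the identification $J \circ \Phi(p,q) = J(S(p)) + J(I(q)) \cong p + 0 \cong p$ gives the compatibility with the projection onto $\pic$, while $\Phi(0,q) = S(0) + I(q) \cong I(q)$ gives the compatibility with $I$. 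The compatibility of these isomorphisms with the distinguished isomorphism $J \circ I \cong 0$ follows from the additivity of the functors involved.

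It remains to show that $\Phi$ is an equivalence of strictly commutative Picard $\bS$-stacks, which by Remark \ref{012} amounts to verifying that $\pi_i(\Phi)$ is an isomorphism for $i = 0,1$. For this I would invoke the long exact sequence (\ref{prop:longexseq}) attached to $\eic$: the existence of the section $S$ implies that $\pi_i(J)$ is split surjective and that the connecting map $\delta$ vanishes, so (\ref{prop:longexseq}) breaks into split short exact sequences $0 \to \pi_i(\qic) \to \pi_i(\eic) \to \pi_i(\pic) \to 0$ with $\pi_i(S)$ as a section, and the five lemma then yields $\pi_i(\Phi) : \pi_i(\pic) \oplus \pi_i(\qic) \xrightarrow{\cong} \pi_i(\eic)$.

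The main obstacle I anticipate is the coherence verification in the second paragraph: one has to be careful that the isomorphisms $J \circ \Phi \cong \mathrm{pr}_\pic$ and $\Phi \circ \mathrm{in}_\qic \cong I$ are compatible with the distinguished isomorphisms $J \circ I \cong 0$ underlying both extensions, so that $\Phi$ is a bona fide equivalence of extensions in the sense of Definition~\ref{def:morfismi} rather than merely an equivalence of the underlying Picard stacks. Once this bookkeeping is carried out, $\eic$ is equivalent as an extension to $\pic \times \qic$, which by definition means $\eic$ is split.
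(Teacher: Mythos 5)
Your proof is correct and follows essentially the same route as the paper: both lift $\mathrm{id}_M$ through the long exact sequence (\ref{eq:thm1}) to obtain a morphism $M \to L$ splitting $j$, hence an additive functor $S:\pic\to\eic$ with $J\circ S\cong \mathrm{Id}_\pic$. The only difference is that the paper stops there and immediately declares $\eic$ split, whereas you carry out the remaining (correct) verification that $(p,q)\mapsto S(p)+I(q)$ is an equivalence of extensions onto $\pic\times\qic$.
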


\begin{proof} By the long exact sequence (\ref{eq:thm1}),
if the cohomology group ${\Ext}^1(M,K)$ is zero, the identity morphisms $id_M: M \rightarrow M$ lifts to a morphism $f : M \rightarrow L$ of $\cD^{[-10]}(\bS)$ which corresponds via the equivalence of categories (\ref{st}) to an isomorphism classes of additive functors $F: \pic \rightarrow \eic$ such that $J \circ F \cong Id_{\pic}$. Hence $\eic$ is a split extension of $\pic$ by $\qic$.
\end{proof}

The above Lemma means that $\Theta(\eic)$ is an obstruction for the extension $\eic$  to be split: $\eic$ is split if and only if $id_M: M \rightarrow M$ lifts to ${\Hom}(M,L)$ if and only if $\Theta(\eic)$ vanishes in ${\Ext}^1([\pic],[\qic])$.

(2) Construction of $\Psi$: Choose two complexes $P=[ P^{-1} \stackrel{d_P}{\rightarrow} P^0]$ and $N=[N^{-1} \stackrel{d_N}{\rightarrow} N^0]$ of $\cD^{[-10]}(\bS)$ such that
$P^{-1}, P^0$ are projective and the three complexes $N,P,M$ build a short exact sequence in $\cD^{[-10]}(\bS)$
\begin{equation}
0 \longrightarrow N  \stackrel{s}{\longrightarrow} P \stackrel{t}{\longrightarrow} M \longrightarrow 0
\label{eq:thm2}
\end{equation}
(because of the projectivity of the $P^i$ for $i=-1,0$, there exists a surjective morphism of complexes $P \rightarrow M$ and then, in order to get a short exact sequence, choose $N^i = \ker( P^i \rightarrow M^i)$ for $i=-1,0$). 
By Remark \ref{rem:exact-ext} the above exact sequence furnishes an extension of strictly commutative Picard $\bS$-stacks 
\[  st(N)  \stackrel{S}{\longrightarrow} st(P) \stackrel{T}{\longrightarrow} \pic \]
where $S$ and $T$ are the isomorphism classes of additive functors corresponding to the morphisms $s$ and $t$. Applying ${\Hom}(-,K[1])$ to the distinguished triangle 
$N  \rightarrow P \rightarrow M \rightarrow +$ associated to the
short exacts sequence (\ref{eq:thm2}) we get the long exact sequence
\begin{equation}
  \cdots \rightarrow {\Hom}(M,K)  \rightarrow {\Hom}(P,K) \stackrel{ \circ s}{\rightarrow} {\Hom}(N,K) \stackrel{\partial}{\rightarrow} {\Ext}^1(M,K) \rightarrow 0.
\label{eq:thm3}
\end{equation}
Given an element $x$ of ${\Ext}^1(M,K)$, choose an element $u$ of ${\Hom}(N,K)$ such that $\partial (u) =x$. We set
\[ \Psi(x)=  U_*st(P), \]
i.e. $\Psi(x)$ is the push-down $U_*st(P)$ of the extension $st(P)$ via one representative of the isomorphism class $U:st(N) \rightarrow \qic$ of additive functors corresponding to the morphism $u:N \rightarrow K$ of $\cD^{[-10]}(\bS)$.
 By Lemma \ref{lem:push-down} the strictly commutative Picard $\bS$-stack $\Psi(x)$ is an extension of $\pic$ by $\qic$. Now we check that the morphism $\Psi$ is well defined, i.e. $\Psi(x)$ doesn't depend on the lift of $x$.  If $u' \in {\Hom}(N,K)$ is another lift of $x$, then there exists an element $f$ of ${\Hom}(P,K)$
 such that $u'-u=f \circ s.$ Consider the push-down $(U'-U)_*st(P)$ of the extension $st(P)$ via one representative of the isomorphism class $U'-U:st(N) \rightarrow \qic$ of additive functors (as for $u$, we denote here by $U':st(N) \rightarrow \qic$ the isomorphism class corresponding to the morphism $u':N \rightarrow K$ of $\cD^{[-10]}(\bS)$). Since $u'-u=f \circ s$, by the universal property of the push-down there exists a unique additive functor $H: (U'-U)_*st(P) \rightarrow \qic$ such that $H \circ In \cong Id_\qic$, where $In: \qic \rightarrow (U'-U)_*st(P)$ is the additive functor underlying the extension $(U'-U)_*st(P)$ of $\pic$ by $\qic$. Hence 
 the extension $(U'-U)_*st(P)$ of $\pic$ by $\qic$ is split and so the extensions $U'_*st(P)$ and $U_*st(P)$ are equivalent.
 
(3) $ \Theta \circ \Psi = Id$: With the notation of (2), given an element $x$ of ${\Ext}^1(M,K)$, choose an element $u$ of ${\Hom}(N,K)$ such that $\partial (u) =x$. 
By definition $\Psi(x)=  U_*st(P)$. Because of the naturality of the connecting map $\partial$, the following diagram commutes
\[\xymatrix{
  {\Hom}(M,K)  \ar@{=}[d] \ar[r] & {\Hom}(P,K)  \ar[r]& {\Hom}(N,K) \ar[r]^{\partial} &{\Ext}^1(M,K) \ar@{=}[d] \\
  {\Hom}(M,K) \ar[r] & {\Hom}([U_*st(P)],K) \ar[r] \ar[u] & {\Hom}(K,K) \ar[u]^{\circ u}  \ar[r]^{\partial} & {\Ext}^1(M,K)
}
\]
Therefore $\Theta(U_*st(P))=x$, i.e. $\Theta$ surjective.

(4) $\Psi \circ \Theta= Id$: Consider an extension $\eic=(I:\qic \rightarrow \eic,J:\eic \rightarrow \pic)$ of $\pic$ by $\qic$ and 
Denote by $L=(i:K\rightarrow L,j:L\rightarrow M)$ the corresponding extension of complexes in $\cD^{[-10]}(\bS)$. Choose two complexes $P=[ P^{-1} \rightarrow P^0]$ and $N=[N^{-1} \rightarrow N^0]$ as in (2). The lifting property of the complex $P$ furnishes a lift $u:P \rightarrow L$ of the morphism of complexes $t:P \rightarrow M$ and hence a commutative diagram 
\[\xymatrix{
   st(N)  \ar[d]_{U_|} \ar[r]^{S} & st(P) \ar[d]_{U} \ar[r]^{T} & \pic  \ar[d]^{Id_\pic}  \\
  \qic \ar[r]_{I} & \eic \ar[r]_{J} & \pic .
}
\]
where $U:st(P) \rightarrow \eic$ is the isomorphism class of additive functors corresponding to the lift $u:P \rightarrow L$ and $U_|:st(N) \rightarrow \qic$
is the restriction of $U$ to $st(N)$. Consider now the push-down $(U_|)_*st(P)$ of 
the extension $st(P)$ via a representative of $U_|$. Because of the universal property of the push-down, there exists a unique additive functor $H: (U_|)_* st(P) \rightarrow \eic$ such that the following diagram commute
\[\xymatrix{
   st(N)  \ar[dd]^{U_|} \ar[dr]^{U_|} \ar[rr]^{S}&  & st(P) \ar@/_/[dd]|-{U} \ar[dr] \ar[rr]^{T} & & \pic  \ar@/_/[dd]|-{Id_\pic} \ar[dr]^{Id_\pic}& \\
 & \qic  \ar[rr] \ar[dl]^{Id_\qic} &  & \qic +^{st(N)} st(P) \ar@{.>}[dl]^{H} \ar[rr] & & \pic  \ar[dl]^{Id_\pic}  \\
  \qic \ar[rr]_{I} & & \eic \ar[rr]_{J} & & \pic .& 
}
\]
Hence we have that the extensions $(U_|)_*st(P)$ and $\eic$ are equivalent, which implies that $\Psi (\Theta(\eic))=\Psi (\Theta((U_|)_*st(P)))=(U_|)_*st(P) \cong \eic$, i.e. $\Theta$ injective.

(5) $\Theta$ is an homomorphism of groups: Consider two extensions $\eic, \eic'$ of $\pic$ by $\qic$. With the notations of (2) we can suppose that $\eic= U_*st(P)$ and 
$\eic'= U'_*st(P)$ with $U,U':st(N) \rightarrow \qic$ two isomorphism classes of additive functors corresponding to two morphisms $u,u':N \rightarrow K$ of $\cD^{[-10]}(\bS)$. Now by Definition \ref{def:+}
\begin{eqnarray}
\nonumber \eic + \eic' & = & D^* (+_\qic)_*  \big(U_*st(P) \times U'_*st(P)\big)\\
\nonumber  & = & D^* (+_\qic)_* (U \times U')_* (st(P) \times st(P))\\
\nonumber  & = & (U + U')_* D^* (+_{st(N)})_*  (st(P) \times st(P))\\
\nonumber  & = & (U + U')_*  (st(P) + st(P))
\end{eqnarray}
where $D: \pic \rightarrow \pic \times \pic $ is the diagonal additive functor and $+_\qic: \qic \times_\bS \qic \rightarrow \qic$ (resp. $+_{st(N)}: st(N) \times_\bS st(N) \rightarrow st(N)$) is the functor underlying the strictly commutative Picard $\bS$-stack $\qic$ (resp. $st(N)$). If $\partial:{\Hom}(N,K) \rightarrow {\Ext}^1(M,K)$ is the connecting map of the long exact sequence (\ref{eq:thm3}), we get 
\[ \Theta ( \eic + \eic') = \partial (u+u') = \partial (u) +\partial(u')= \Theta ( \eic )+ \Theta( \eic') .\]   

\begin{rem} In the construction of $\Psi: {\Ext}^1([\pic],[\qic]) \rightarrow {\cExt}^{1}(\pic,\qic)$, instead of two complexes $P=[ P^{-1} \rightarrow P^0]$ and $N$ of $\cD^{[-10]}(\bS)$ such that
$P^{-1}, P^0$ are projective and the three complexes $N,P,M$ build a short exact sequence $ 0 \rightarrow N \rightarrow P \rightarrow M \rightarrow 0$, we can consider two complexes $I=[ I^{-1} \rightarrow I^0]$ and $N'$ of $\cD^{[-10]}(\bS)$ such that
$I^{-1}, I^0$ are injective and the three complexes $K,I,N'$ build a short exact sequence $ 0 \rightarrow K \rightarrow I \rightarrow N' \rightarrow 0.$  In this case instead of applying ${\Hom}(-,K[1])$ we apply  ${\Hom}(M,-)$ and instead of considering push-downs of extensions we consider pull-backs. This two way to construct $\Psi$ with projectives or with injectives are dual.
\end{rem} 


\end{document}